\newcommand{\mathd}{\mathrm{d}}
\DeclareMathOperator{\GNS}{GNS}
\definecolor{gr}{rgb}   {0.,   0.69,   0.23 }
\definecolor{bl}{rgb}   {0.,   0.5,   1. }
\definecolor{mg}{rgb}   {0.85,  0.,    0.85}
\definecolor{yl}{rgb}   {0.8,  0.7,   0.}
\definecolor{or}{rgb}  {0.7,0.2,0.2}
\newtheorem{theorem}{Theorem} [section]
\newtheorem{lemma}[theorem]{Lemma}
\newtheorem{proposition}[theorem]{Proposition}
\newtheorem{remark}[theorem]{Remark}
\newcommand{\noi}{\noindent}
\newcommand{\Z}{\mathbb{Z}}
\newcommand{\R}{\mathbb{R}}
\newcommand{\T}{\mathbb{T}}
\let\P= \undefined
\newcommand{\P}{\mathbf{P}}
\newcommand{\E}{\mathbb{E}}
\newcommand{\F}{\mathcal{F}}
\newcommand{\al}{\alpha}
\newcommand{\dl}{\delta}
\newcommand{\Dl}{\Delta}
\newcommand{\eps}{\varepsilon}
\newcommand{\s}{\sigma}
\newcommand{\ft}{\widehat}
\newcommand{\wt}{\widetilde}
\newcommand{\cj}{\overline}
\newcommand{\dx}{\partial_x}
\newcommand{\dt}{\partial_t}
\renewcommand{\l}{\ell}
\renewcommand{\o}{\omega}
\renewcommand{\O}{\Omega}
\newcommand{\les}{\lesssim}
\newcommand{\jb}[1]
{\langle #1 \rangle}
\newcommand{\ind}{\mathbf 1}
\newtheorem*{ackno}{Acknowledgements}
\numberwithin{equation}{section}
\numberwithin{theorem}{section}
\begin{document}
\baselineskip = 14pt

\title[Gibbs measure for the focusing fractional NLS]
{Gibbs measure for the focusing fractional NLS on the torus}

\author[R.~Liang and Y.~Wang]
{Rui Liang and Yuzhao Wang}

\address{
Rui Liang and Yuzhao Wang\\
School of Mathematics\\
Watson Building\\
University of Birmingham\\
Edgbaston\\
Birmingham\\
B15 2TT\\ United Kingdom}

\email{RXL833@student.bham.ac.uk; y.wang.14@bham.ac.uk}

\subjclass[2010]{35Q55, 60H40, 60H30}

\keywords{focusing Gibbs measure; normalizability; variational approach; fractional nonlinear Schr\"odinger equation; fractional Gagliardo-Nirenberg-Sobolev inequality}

\begin{abstract}
We study the construction  of the Gibbs measures for the {\it focusing} mass-critical fractional nonlinear Schr\"odinger equation on the multi-dimensional torus.
We identify the sharp mass threshold for normalizability and non-normalizability 
of the  focusing Gibbs measures,
which generalizes the influential works of Lebowitz-Rose-Speer (1988), Bourgain (1994), 
and Oh-Sosoe-Tolomeo (2021) on the one-dimensional nonlinear Schr\"odinger equations.
To this purpose, 
we establish an almost sharp fractional Gagliardo-Nirenberg-Sobolev inequality on the torus,
which is of independent interest. 

\end{abstract}

%\date{\today}
%%
%
\maketitle

\vspace{-5mm}

%\tableofcontents

\section{Introduction}
\label{SEC:1}

\subsection{Focusing Gibbs measures}
In this paper, 
we consider the focusing Gibbs measure $\rho_{s,p}$
on the $d$-dimensional torus $\T^d = (\R /  \Z)^d$,
formally given by
\begin{align}
\label{Gibbs}
d\rho_{s,p} (u) = Z_{s,p}^{-1} \exp \bigg(\frac1{p} \int_{\T^d} |u|^p dx \bigg) d\mu_s (u)
\end{align}

\noi
for $p > 2$,
where $Z_{s,p}$ is a normalization constant.  Here,  $\mu_s$ is the Gaussian probability measure 
with the density:
\begin{align}
\label{GPM}
d\mu_s = Z_s^{-1} e^{-\frac12 \int_{\T} |D^s u|^2 d x} d u 
= Z_s^{-1} \prod_{n\neq 0} e^{-\frac12 (2\pi |n|)^{2s} |\ft u(n)|^2} d \ft u (n),
\end{align}

\noi
where $D = \sqrt{-\Delta}$  and $\ft u (n)$ denotes the Fourier coefficient of $u$.
When $s=1$, the measure $\mu_s$ corresponds to the massless 
Gaussian free field on $\T^d$.
A typical function $u$ in the support of $\mu_s$
is given by the random Fourier series:
\begin{align}
 \label{random_s}
 u^\o(x) = \sum_{n \in \Z^d \setminus \{0\}} \frac{g_n (\o)}{(2\pi |n|)^s} e^{ 2\pi i n \cdot x},
\end{align}

\noi
where $\{g_n\}_{n\in \Z^d \backslash \{0\}}$
denotes a sequence of independent standard complex-valued Gaussian random variables on a probability space $(\O, \F, \P)$.
When $d = 1$, the expression \eqref{random_s} corresponds to 
the mean-zero Brownian loop for $s = 1$ (namely $u(0) = u(1)$)
and to the mean-zero fractional Brownian loop for $\frac 12 < s < \frac 32$.
See \cite[Section 5]{OSTz}.
A standard computation shows that $u$ in~\eqref{random_s}
belongs to $\dot W^{\s, p}(\T^d) \setminus \dot W^{s - \frac d2, p}(\T^d)$ for any 
$\s < s - \frac d2$ and 
$1\leq p \leq \infty$
almost surely, where 
$\dot W^{\s, p}(\T^d)$ denotes the homogeneous Sobolev space (= the Riesz potential space)
defined by the norm:
\[ \| u \|_{\dot W^{\s, p}(\T^d)} = \| D^{\s} u \|_{L^p(\T^d)}
= \big\| \F^{-1}((2\pi|n|)^{2\s} \ft u(n) )\big \|_{L^p(\T^d)}. \]

\noi
In particular,  when $s > \frac{d}2$, 
$u$ in the support of $\mu_s$ is almost surely a function, 
while 
it is merely a distribution when $s \leq \frac d2$.
In the latter case, 
 the potential 
energy $\frac1{p} \int_{\T^d} |u|^p dx $ 
in \eqref{Gibbs} does not make sense as it is
and thus one needs to introduce  renormalization.
See Remark \ref{REM:rough} below.
In this paper, we focus 
on the case $s > \frac d 2$
such that a typical element in the support of $\mu_s$ is a function.

The main difficulty in the construction of the focusing Gibbs measure $\rho_{s, p}$
in \eqref{Gibbs}
comes from the unboundedness of the potential energy.
In fact, with \eqref{random_s}
and $\l^2(\Z^d) \subset \l^p(\Z^d)$, we immediately see that 
\[ \E_{\mu_s} \bigg[ e^{\frac 1p (\int_{\T^d} |u|^2dx)^{\frac{p}2}}\bigg]
\geq \E_{\mu_s} \bigg[ e^{\frac 1p \|u\|_{\F L^p (\T^d)}^p}\bigg]
\geq \prod_{n \in \Z^d} \E\bigg[e^{\frac{|g_n|^p}{(2\pi |n|)^{sp}}}\bigg]
= \infty
\]

\noi
for $p > 2$,
where $\F L^p  (\T^d)$ denotes Fourier Lebesgue space defined by the norm
\[
\| u\|_{\F L^p  (\T^d)} = \bigg( \sum_{n\in \Z^d} |\ft u (n)|^p \bigg)^{\frac1p}
.
\]

\noi
In a seminal work \cite{LRS}, Lebowitz-Rose-Speer proposed to consider
the focusing Gibbs measure of the following form  (when $d = s = 1$):
\begin{align}
d\rho_{s,p} (u) = Z_{1,p}^{-1}
\ind_{\{\|u\|_{L^2(\T^d)}\le K\}}
 \exp \bigg(\frac1{p} \int_{\T^d} |u|^p dx \bigg) d\mu_s (u).
\label{Gibbs2}
\end{align}

\noi
In \cite{LRS, BO94, OST1}, 
Lebowitz-Rose-Speer, Bourgain, and Oh-Sosoe-Tolomeo showed that 
the Gibbs measure $\rho_{1, p}$ on the one-dimensional torus $\T$
 is indeed 
 normalizable for (i) $2 < p < 6$ and any finite $K > 0$
 and (ii) $p = 6$ and any $0 < K \leq \|Q\|_{L^2(\R)}$, 
 where $Q$ is the (unique) minimizer of 
 the Gagliardo-Nirenberg-Sobolev inequality on $\R$
 with $\|Q\|_{L^6(\R)}^6 = 3 \|D Q\|_{L^2(\R)}$, 
 while it is not normalizable 
 for (iii) $p > 6$ 
 and (iv) $p = 6$ and $K > \|Q\|_{L^2(\R)}$.

The main purpose of this paper is to study the focusing Gibbs measure
$\rho_{s,p}$ in \eqref{Gibbs2} with a mass cutoff for $s > \frac{d}2$ and to identify sharp conditions for its normalizability.
More precisely,
in the subcritical case (i.e.~$2 < p < \frac {4s}{d} + 2$), 
we prove that the focusing Gibbs measure $\rho_{s,p}$ in~\eqref{Gibbs2} 
is normalizable for any $K > 0$, 
while we prove its non-normalizability for any $K > 0$ in the supercritical case
(i.e.~$p> \frac {4s}{d} + 2$).
In the critical case (i.e.~$p = \frac {4s}{d} + 2$), 
we then show that 
there is a critical mass threshold, 
characterized by an optimizer for the Gagliardo-Nirenberg-Sobolev inequality on $\R^d$
(see Section~\ref{SEC:GNS})
such that  the focusing Gibbs measure $\rho_{s,p}$ 
is normalizable below this critical mass threshold, while it is not above this threshold.
See Theorem~\ref{THM:main}.
Previously, these results were known only for $d = s = 1$  (\cite{LRS,BO94,OST1})
and our aim is to extend these results for any $d \geq 1$, $s > 0$, 
and $p > 2$, provided that $s > \frac d2$ (such that the random Fourier series 
$u$ in \eqref{random_s} defines a function).
See \cite{OST} for the case $ s= \frac d2$.
We refer the reader to \cite{OST1} and the references therein for further  background.

The energy functional associated with   the Gibbs measure $\rho_{s,p}$ in \eqref{Gibbs2} is given by
\begin{align}
    \label{Hamiltonian}
    H_{\T^d}(u) = \frac12 \int_{\T^d} |D^{s} u|^2 d x - \frac1p \int_{\T^d} |u|^p d x.
\end{align}

\noi
%with $D = \sqrt{-\Delta}$ being the half Laplacian operator.
We point out that the construction of the Gibbs measure
is not only of interest in the area of mathematical physics such as 
constructive Euclidean quantum field theory,
but is also crucial in the study of Hamiltonian PDEs \cite{LRS, BO94,BO96,OQV,ORTh,OOT, OST,OST1}. 
An important example of Hamiltonian PDEs corresponding to
the energy functional \eqref{Hamiltonian}
is the following fractional nonlinear Schr\"odinger equation:
\begin{align}
i\dt u + D^{2s} u = |u|^{p-2} u.
\label{fNLS}
\end{align}

%\noi
%and the dispersion generalized  nonlinear wave  equations:
%\begin{align}
%\label{fNLW}
%\dt^2 u + D^{2s} u = |u|^{p-2} u.
%\end{align}

\noi
The equation \eqref{fNLS} corresponds
to the nonlinear Schr\"odinger equation (NLS) when $s=1$ (\cite{BO94,BO96}),
 to the biharmonic NLS  when $s =2$ (\cite{OTz, OSTz, OTW}), 
 and to the nonlinear half-wave 
equation when $s = \frac12$ (\cite{Poco}).
In the seminal work \cite{BO94,BO96},
Bourgain showed that we can extend the local-in-time dynamics of \eqref{fNLS}
globally in time by using the 
Gibbs measure\footnote{Strictly speaking, we need to modify 
the massless fractional Gaussian free field $\mu_s$ in \eqref{GPM}
by the massive one to avoid an issue at the zeroth frequency.  See Remark \ref{REM:mass}.
}
 as a replacement of a conservation law.
Over the last decade, we have seen a tremendous progress
in the study of this subject.
See \cite{BOP4} for a survey on the subject and for the references therein.

\medskip

We now state our main result.

\begin{theorem}\label{THM:main}

Let  $d \geq 1$, $s>\frac{d}2$,  and  $p > 2$.
Given  $K > 0$, define
the partition function $Z_{s,p, K}$ by 
\begin{align}
\label{Z1}
Z_{s,p,K}=\E_{\mu_s}
\Big[e^{\frac{1}{p} \int_{\T^d} |u|^p\,\mathd  x}\ind_{\{\|u\|_{L^2(\T^d)}\le K\}}\Big], 
\end{align}

\noi
where $\E_{\mu_s}$ denotes an expectation with respect
to the law $\mu_s$ of the random Fourier series in  
%  the mean-zero random distribution
  \eqref{random_s}.
Then, the following statements hold\textup{:}

\smallskip

\noi
\begin{itemize}
\item
[\textup{(i)}] \textup{(subcritical case)}
If $2< p< \frac{4s}d+2$, then $Z_{s,p,K}<\infty$ for any $K>0$.

\smallskip

\noi
\item[\textup{(ii)}] \textup{(critical case)}
Let $p=\frac{4s}d+2$.
Then, $Z_{s,p,K}<\infty$ if $K<\|Q\|_{L^2(\R^d)}$, and $Z_{s,p,K}=\infty$ if $K>\|Q\|_{L^2(\R^d)}$. Here, $Q$ is the optimizer for the Gagliardo-Nirenberg-Sobolev inequality
on $\R^d$ such that $\| Q\|_{L^p (\R^d)}^p = \frac{p}2 \|D^s Q\|_{L^2 (\R^d)}^2$.

\item[\textup{(iii)}]
\textup{(supercritical case)}
If $ p> \frac{4s}d+2$,
then $Z_{s,p,K} = \infty$ for any $K >0$.

\end{itemize}

\end{theorem}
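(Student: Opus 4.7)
The plan is to combine a Boué–Dupuis variational representation of $-\log Z_{s,p,K}$ with the almost sharp fractional Gagliardo–Nirenberg–Sobolev (GNS) inequality on $\T^d$ announced in the abstract. I would work throughout with a frequency-truncated partition function $Z_{s,p,K,N}$ obtained by replacing $u$ by $\Pi_N u$, establish uniform (in $N$) bounds of the appropriate sign, and pass to the limit via Fatou or monotone convergence.

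\textbf{Upper bounds for (i) and (ii) when $K<\|Q\|_{L^2(\R^d)}$.} Decomposing a truncated field as $\Pi_N u = X_N + I_N(\theta)$ with $X_N$ the truncated Gaussian field and $I_N(\theta)$ a drift satisfying $\|D^s I_N(\theta)\|_{L^2(\T^d)}^2 \le \|\theta\|_{L^2([0,1]\times\O)}^2$, the Boué–Dupuis formula reads
\begin{align*}
-\log Z_{s,p,K,N} = \inf_{\theta}\,\E\Big[-\tfrac{1}{p}\|X_N+I_N(\theta)\|_{L^p(\T^d)}^p + \tfrac{1}{2}\|\theta\|_{L^2}^2\Big],
\end{align*}
where the infimum is effectively restricted to drifts respecting $\|X_N+I_N(\theta)\|_{L^2(\T^d)}\le K$. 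Since $s>d/2$, $X_N$ is uniformly bounded in every $L^p$-moment, so its contribution is harmless. For the drift, the fractional GNS inequality gives
\begin{align*}
\|I_N(\theta)\|_{L^p(\T^d)}^p \les \|D^s I_N(\theta)\|_{L^2}^{\al p}\|I_N(\theta)\|_{L^2}^{(1-\al)p},\qquad \al p = \tfrac{d(p-2)}{2s}.
\end{align*}
In the subcritical range ($\al p < 2$), Young's inequality absorbs this into $\tfrac12\|\theta\|_{L^2}^2$ at the cost of a $K$-dependent constant, giving (i). In the critical range ($\al p = 2$), the almost sharp $\T^d$ GNS inequality of the paper yields, for every $\dl>0$,
\begin{align*}
\tfrac{1}{p}\|I_N(\theta)\|_{L^p}^p \le \tfrac{1+\dl}{2}\bigg(\frac{\|I_N(\theta)\|_{L^2}}{\|Q\|_{L^2(\R^d)}}\bigg)^{p-2}\|D^s I_N(\theta)\|_{L^2}^2,
\end{align*}
using the normalization $\|Q\|_{L^p(\R^d)}^p = \tfrac{p}{2}\|D^s Q\|_{L^2(\R^d)}^2$. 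The mass cutoff forces the prefactor to be at most $\tfrac{1+\dl}{2}(K/\|Q\|_{L^2(\R^d)})^{p-2}$; for $K<\|Q\|_{L^2(\R^d)}$ and $\dl$ small, this is strictly less than $1/2$, so after absorbing into $\tfrac12\|\theta\|_{L^2}^2$ one obtains a uniform lower bound on $-\log Z_{s,p,K,N}$, and hence $Z_{s,p,K}<\infty$.

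\textbf{Non-normalizability for (ii) with $K>\|Q\|_{L^2(\R^d)}$ and (iii).} Starting from the $\R^d$ optimizer $Q$ (or a smooth compactly supported near-optimizer), take the test profile $v_\lambda(x) = c\,\lambda^{d/2}Q(\lambda x)$, extended periodically for $\lambda$ large so it lives in a small ball of $\T^d$, with $c = (1-\eta)K/\|Q\|_{L^2(\R^d)}$ in the critical case and $c=1$ in the supercritical case. Then $\|v_\lambda\|_{L^2(\T^d)} \le (1-\eta)K < K$, while a direct scaling computation using $\|Q\|_{L^p(\R^d)}^p = \tfrac{p}{2}\|D^s Q\|_{L^2}^2$ yields
\begin{align*}
\tfrac{1}{p}\|v_\lambda\|_{L^p}^p - \tfrac{1}{2}\|D^s v_\lambda\|_{L^2}^2 \too +\infty
\end{align*}
as $\lambda\to\infty$: in (ii) because $c>1$ for $\eta$ small and $c^p > c^2$ force a positive leading coefficient, in (iii) because $d(p-2)/2 > 2s$ lets the $L^p$-energy dominate the $\dot H^s$-energy. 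Applying the Cameron–Martin shift $u = v_\lambda + w$ with $w\sim\mu_s$ restricted to $\{\|w\|_{L^2}\le\eta K\}$ (which has positive $\mu_s$-probability by Gaussian small-ball theory) yields a lower bound on $Z_{s,p,K,N}$ of order $\exp\!\big(\tfrac{1}{p}\|v_\lambda\|_{L^p}^p - \tfrac{1}{2}\|v_\lambda\|_{\dot H^s}^2 - O(1)\big)$, which diverges with $\lambda$.

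\textbf{Principal obstacle.} The single nontrivial analytic ingredient is the almost sharp fractional GNS inequality on $\T^d$ with the $\R^d$ sharp constant; this is what powers the precise threshold $\|Q\|_{L^2(\R^d)}$ in (ii). Its proof will require a concentration–compactness / localization argument that restricts near-extremizers on $\T^d$ to small balls and transfers the known $\R^d$ sharp inequality with only a $(1+\dl)$ multiplicative loss. Once this GNS input is in place, the remainder of the argument follows the established variational template of \cite{OST1}.
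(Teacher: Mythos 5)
Your overall plan correctly identifies the Boué–Dupuis formula and the almost sharp GNS inequality as the two key ingredients, and your subcritical argument (i), as well as the general shape of the non-normalizability argument, are sound. However, there is a genuine gap in the critical normalizability half of (ii), and the non-normalizability argument takes a different route from the paper that you have not fully justified.

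\textbf{Gap in (ii), normalizability.} You apply the GNS inequality to the drift $I_N(\theta)$ alone and assert that the mass cutoff forces the prefactor to be at most $\tfrac{1+\dl}{2}(K/\|Q\|_{L^2(\R^d)})^{p-2}$. But the mass cutoff constrains $\|X_N + I_N(\theta)\|_{L^2}\le K$, not $\|I_N(\theta)\|_{L^2}\le K$; it only yields $\|I_N(\theta)\|_{L^2}\le K + \|X_N\|_{L^2}$, where $\|X_N\|_{L^2}$ is a Gaussian quantity with all moments finite but without an almost-sure bound. Thus the prefactor $\big(\|I_N(\theta)\|_{L^2}/\|Q\|_{L^2}\big)^{p-2}$ can exceed $1$ on an event of positive probability, and since the critical GNS exponent in $\dot H^s$ is exactly $2$, there is no Young-type absorption available to compensate. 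This is harmless in (i) precisely because the exponent there is $<2$, but in the critical case it destroys the sharp constant.

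The paper resolves this by introducing a \emph{random} frequency threshold $N_\eps(\o)$ for which $\|P_{>N_\eps}Y_s(1)\|_{L^2}\le\eps$, then applies the GNS inequality to $P_{\le N_\eps}Y_s(1)+I_s(\theta)(1)$ (whose $L^2$ norm is $\le K+\eps$), not to $I_s(\theta)(1)$ alone, and finally splits the resulting $\dot H^s$ norm. The price is that one must show $\E\big[\|P_{\le N_\eps}Y_s(1)\|_{\dot H^s}^2\big]<\infty$, which requires a Fernique-type tail estimate showing $\P(N_\eps\sim N)\les e^{-c\eps^2 N^{2s-d}}$; this is an essential and nontrivial step that has no counterpart in your outline.

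\textbf{Non-normalizability.} You propose a Cameron–Martin shift by a scaled soliton $v_\lambda$ on the set $\{\|w\|_{L^2}\le\eta K\}$, which is the Lebowitz–Rose–Speer route rather than the paper's. The paper instead stays within the Boué–Dupuis framework, building a drift $\theta$ from the soliton $W_\rho$ together with a stochastic correction $-D^s\tfrac{d}{dt}Z_M(t)$ ($Z_M$ an Ornstein–Uhlenbeck approximation of $Y_s$); the mean-zero property of $Z_M$ cancels the cross term $\langle\tfrac{d}{dt}Z_M, W_\rho\rangle_{\dot H^s}$ in expectation. Your Cameron–Martin version can also be made to work, but as written it omits the cross term $\langle v_\lambda, w\rangle_{\dot H^s}$ in the Radon–Nikodym density, which has variance $\|v_\lambda\|_{\dot H^s}^2\sim\lambda^{2s}$ and is \emph{not} negligible; you would need to restrict additionally to a half-space where it has favourable sign or otherwise control it, and you would also need a pointwise restriction such as $\|w\|_{L^\infty}\le M$ to prevent $R_p(v_\lambda+w)$ from dropping below $(1-\eps)R_p(v_\lambda)-O(1)$. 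Finally, the localization of $Q$ to $\T^d$ cannot be just ``extend periodically'': because $\dot H^s$ is nonlocal, you need the cutoff $\phi_\dl$ and the comparison $\|u\|_{\dot H^s(\T^d)}\le\|u\|_{\dot H^s(\R^d)}$ (Remark \ref{RMK:compare} in the paper), which the paper builds into the construction of $W_\rho$ in Lemma \ref{LEM:soliton}.
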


As mentioned above, 
Theorem \ref{THM:main} extends the results
in \cite{LRS, BO94, OST1}
to  $d\geq 1$ and $s > \frac d2 $.
When $d = s= 1$, 
Bourgain \cite{BO94} proved Theorem \ref{THM:main}\,(i)
and also (ii) (but with sufficiently small $K \ll 1$), 
using the dyadic pigeon hole principle
and the Sobolev embedding theorem.
In \cite{LRS}, Lebowitz-Rose-Speer  proved the non-normalizability 
in Theorem~\ref{THM:main} (i.e.~for $K > \|Q\|_{L^2 (\R)}$ when $p = 6$ 
%as in Theorem \ref{THM:main}\,(ii)
 and for any $K >0$ when $p > 6$).
%  as in 
%Theorem \ref{THM:main}\,(iii)).
Their argument was based on  a Cameron-Martin type 
argument and the sharp Gagliardo-Nirenberg-Sobolev (GNS) inequality:
\begin{align}
\label{GNS}
\|u\|_{L^p(\R)}^{p} \le C_{\textup{GNS}} \| u\|^{\frac{p}{2}-1}_{\dot H^1 (\R)} \|u\|^{\frac{p}{2}+1}_{L^2(\R)},
\end{align}

\noi
where $C_{\textup{GNS}}$ is the optimal constant.
In \cite{OST1}, Oh-Sosoe-Tolomeo  refined Bourgain's argument 
and used the sharp GNS inequality
to prove the normalizability part in Theorem \ref{THM:main}\,(ii), 
thus
identifying the optimal mass threshold at the critical mass nonlinearity.
In the same paper, 
Oh-Sosoe-Tolomeo also proved 
the normalizability of the focusing Gibbs measure at the critical mass threshold
 $K=\|Q\|_{L^2(\R)}$
when $ d= s = 1$. See Remark \ref{REM:OST}.

The main difficulties in proving Theorem \ref{THM:main} 
come from the non-local nature of the fractional derivatives $D^s = (-\Dl)^\frac{s}{2}$
and the non-integer critical exponents $p = \frac{4s}d+2$.
In particular,
the non-local derivative poses extra difficulty in
localizing the GNS inequality, initially on $\R^d$, to the torus $\T^d$.
Inspired by \cite{BO},
we exploit a characterization of the $\dot H^s (\R^d)$-norm in terms of high order difference operators \eqref{difference}.
We then establish
 an almost sharp GNS inequality on $\T^d$ 
 (with the sharp constant $C_{\textup{GNS}}$ in \eqref{GNS})
 by using this new characterization. 
See Proposition \ref{PROP:T}.

With the sharp GNS inequality on $\T^d$, 
our proof of   Theorem \ref{THM:main} is based on 
 the variational approach due
to Barashkov-Gubinelli~\cite{BG} 
and  is quite different from those in \cite{LRS, BO94, OST1}, 
thus 
providing  an alternative   proof of the results when $d = s = 1$.
We first 
express the partition function 
$Z_{s,p,K}$ in \eqref{Z1} in a stochastic optimization problem, 
using 
the Bou\'e-Dupuis variational formula (Lemma \ref{LEM:var}).
We then prove 
 the normalizability part 
 of Theorem \ref{THM:main}, 
by using the almost sharp GNS inequality on $\T^d$.
%This part of analysis is strongly inspired by a recent  work
%of Tolomeo-Weber \cite{TW}, 
%where the case $d = s =1$ is treated.\footnote{We point out that the main focus
%in \cite{TW} is the study of the focusing Gibbs measure on $\R$, 
%providing an alternative proof of the result due to Rider \cite{Rider}.}
As for the non-normalizability part,
the main task is to construct a sequence of drift terms
which achieves the divergence of the partition function. 
Our construction of such drift terms
is based on a scaling argument, 
analogous to that in \cite{LRS, OST1}.
We point out that our proof of Theorem \ref{THM:main}, based on the  variational approach, 
is essentially a physical space approach, instead of the Fourier side approach in \cite{BO94, OST1}.
It is thus expected that our approach is more flexible in geometric settings.
See also Appendix B in \cite{OST}, 
where the variational approach was used
to prove Theorem \ref{THM:main}\,(i)
with $s > \frac d2$ and $p = 4$.

\begin{remark}\rm
The key idea in proving Theorem \ref{THM:main} lies in controlling the potential energy $\frac1p \| u \|_{L^p(\T^d)}^p$ by the kinetic energy $\frac12 \|u\|^2_{\dot H^s(\T^d)}$ under the constraint $\| u \|_{L^2 (\T^d)} \le K$.
From Gagliardo-Nirenberg-Sobolev inequality Proposition \ref{PROP:T},
we see that the subcritical case $2 < p < \frac{4s}d +2$ corresponds to weaker potential energy.
The critical exponent $p = \frac{4s}d +2$ leads to the equivalence of potential and kinetic energy, 
where a restriction on the size $K$ is needed to guarantee the normalizability.
For the supercritical case  $p > \frac{4s}d +2$, however,
the kinetic energy losses control of the potential energy no matter how small the mass is.
\end{remark}

\begin{remark}\rm \label{REM:mass}
As in \cite{OST1},
Theorem \ref{THM:main} also applies
 when we replace
the mean-zero fractional Brownian loop in~\eqref{random_s}
by 
 the fractional Ornstein-Uhlenbeck loop: 
\begin{equation}
\label{bloop3}
u(x)=\sum_{n \in \Z^d}\frac{g_n(\o)}{\jb{n}^s}e^{2\pi inx},
\end{equation}

\noi
where $\jb{n} = (1 + 4\pi^2 |n|^2)^\frac{1}{2}$
and  $\{g_n\}_{n \in \Z^d}$ is a sequence of independent standard complex-valued  Gaussian random variables.
See Remark 4.1 in \cite{OST1}.
The law $\wt \mu_s$ of  the fractional Ornstein-Uhlenbeck loop in \eqref{bloop3} has 
the formal density 
\begin{align*}
d \wt \mu_s = \wt Z_s^{-1} e^{-\frac 12 \|u\|_{H^s(\T^d)}^2} d u .
%\label{Gibbs3a}
\end{align*}

\noi
As seen in \cite{BO94}, 
the measure $\wt \mu_s$ is a more natural base Gaussian measure to consider for
the (fractional) nonlinear Schr\"odinger equation \eqref{fNLS}
due to the lack of the conservation of the spatial mean under the dynamics.

Note  that 
Theorem \ref{THM:main} also holds in the real-valued setting
(i.e.~with an extra assumption that $g_{-n} = \cj g_n$ in \eqref{random_s}).
For example, this  is relevant to the study of the dispersion generalized KdV equation
on $\T$:
\begin{align*}
\dt u + D^{2s} \dx u =  \dx (u^{p-1}).
%\label{KdV}
\end{align*}

\end{remark}

\begin{remark}\label{REM:OST} \rm
We point out  that Oh-Sosoe-Tolomeo \cite{OST1}
also showed the normalizability of the Gibbs measure \eqref{Gibbs} at the critical mass threshold when $s=d=1$ and $p=6$.
This result
 is quite striking in view of the presence of the minimal mass blowup solution 
 (at this critical mass) for the focusing quintic NLS on $\T$. 
We will not pursue this question for the fractional focusing Gibbs measure \eqref{Gibbs2},
as their argument is beyond the scope of the framework developed in this paper.
\end{remark}

\begin{remark} \label{REM:rough}\rm 
Since $s > \frac{d}2$,
Theorem \ref{THM:main} only considers the non-singular case,
namely, the measures $\mu_s$ and $\rho_{s, p}$ are supported on functions.
One of the reasons for  only considering the non-singular case 
is that %in our setting, we observe 
the bifurcation phenomena 
at the critical mass (Theorem \ref{THM:main}\,(ii))
are only possible when $s > \frac{d}2$.  
As soon as $s \leq \frac d2$, 
we need to introduce a  proper renormalization to define the potential energy
 $\frac{1}{p} \int_{\T^d} |u|^p\,  d  x$, 
 which necessitates $p$ to be an integer.
When $s = \frac d2$, 
it was shown in \cite{BS,OST} that the renormalized 
focusing Gibbs measure $\rho_{\frac{d}2,4}$ (with $p = 4$, critical), 
endowed with a (renormalized) mass cutoff,
is not normalizable.
It was also shown in \cite{OST} that 
with the cubic interaction ($p=3$, subcritical), 
the renormalized focusing Gibbs measure  $\rho_{\frac{d}2,3}$
endowed with a renormalized mass cutoff
is indeed normalizable.
When $ d= 2$, this normalizability in the case of  the cubic interaction was first observed by Bourgain~\cite{BO99}.
When $d=3$, it has recently been shown that the cubic interaction ($ p = 3$) exhibits
phase transition between weakly and strongly nonlinear regimes. See \cite{OOT2} for more details.

\end{remark}

\begin{remark}\rm

While the construction of the defocusing Gibbs measures
has been extensively studied
and well understood
due to the strong interest in constructive Euclidean quantum field theory
(see \cite{Simon,GJ,ST}), 
the (non-)normalizability issue of the focusing Gibbs measures, 
going back  to the work of Lebowitz-Rose-Speer \cite{LRS}
and Brydges-Slade \cite{BS}, 
is not fully explored.
See   related works
\cite{Rider, BBulut, CFL, OOT, OST1, OST, OOT2, TW}
on the non-normalizability (and other issues)
for focusing Gibbs measures.
In particular, recent works such as \cite{OOT, OOT2}
employ
the variational approach developed in \cite{BG}
and establish certain phase transition
phenomena in the singular setting.

%Let us conclude this introduction
%by quoting
%Bourgain \cite{BO97}:
%``It seems worthwhile to investigate this aspect 
%[the (non-)normalizability issue of the focusing Gibbs measures]
%more as a continuation of 
%\big[\cite{LRS}\big] and \big[\cite{BS}\big].''

\end{remark}

%\newpage

\section{Sharp Gagliardo-Nirenberg-Sobolev inequality}
\label{SEC:GNS}

In order to  prove Theorem \ref{THM:main},
we need the sharp Gagliardo-Nirenberg-Sobolev inequality on $\T^d$.
We first recall the definition of the homogeneous Sobolev space
$\dot H^s(\R^d)$ defined by the norm:
\begin{align}
\label{HsR}
    \| u\|_{\dot H^s (\R^d)}^2 =   \int_{\R^d} (2\pi |\xi|)^{2s} |\ft u(\xi)|^2 d\xi.
\end{align}

% \noi
% It is known that the following fractional Gagliardo-Nirenberg-Sobolev interpolation inequality

% \begin{theorem}[Fractional Gagliarodo-Nirenberg-Sobolev interpolation inequality] 
% \label{THM:fGNS}
% Let $d \geq 1$ and let \textup{(i)} $p > 2$ if $d < 2s$, 
% and \textup{(ii)} $2 < p \le \frac{2d}{d-2s}$ if $d \geq 2s$.
% Then, we have
% \begin{align}\label{fGNS}
%     \|u\|_{L^{p}(\R^d)}^p \les \| u\|^{\frac{(p-2)d}{2s}}_{\dot H^s (\R^d)}\|u\|^{2+\frac{p-2}{2s}(2s-d)}_{L^2(\R^d)}.
% \end{align}
% \end{theorem}

% \noi
% When $s =1$, Theorem \ref{THM:fGNS} is due to Nagy \cite{Nagy} for $d=1$ and Weinstein \cite{weinstein} for $d\ge 2$.
% We refer the readers to \cite{HMOW} and reference therein for
% more discussions on the fractional version \eqref{fGNS}.

\noi 
As mentioned in Section \ref{SEC:1}, 
the optimizer for the Gagliardo-Nirenberg-Sobolev inequality
with the optimal constant:
\begin{equation}\label{GNSdisp}
\|u\|_{L^{p}(\R^d)}^p \le C_{\textup{GNS}}(d,p,s)\| u\|^{\frac{(p-2)d}{2s}}_{\dot H^s (\R^d)}\|u\|^{2+\frac{p-2}{2s}(2s-d)}_{L^2(\R^d)}
\end{equation}
% \begin{equation}\label{GNSdisp}
% \|u\|_{L^p(\R^d)}^p \le C_{\textup{GNS}}(d,p)\|\nabla u\|^{\frac{(p-2)d}{2}}_{L^2(\R^d)}\|u\|^{2+\frac{p-2}{2}(2-d)}_{L^2(\R^d)}
% \end{equation}

\noi
plays an important role in the study of the focusing Gibbs measures.
We recall the following result,

\begin{theorem}[Theorem 2.1, \cite{BFV14}]
\label{THM:fGNS}
Let $d \geq 1$ and let \textup{(i)} $p > 2$ if $d < 2s$, 
and \textup{(ii)} $2 < p \le \frac{2d}{d-2s}$ if $d \geq 2s$.
Consider the functional
\begin{align}
J^{d,p,s}(u)= \frac{\| u\|^{\frac{(p-2)d}{2s}}_{\dot H^s (\R^d)} \|u\|^{2+\frac{p-2}{2s}(2s-d)}_{L^2(\R^d)}}{\|u\|_{L^p(\R^d)}^{p}}
\label{JJ1}
\end{align}

\noi
on $H^s(\R^d)$. 
Then, the minimum
\begin{align}
C^{-1}_{\GNS} = C_{\GNS}(d,p,s)^{-1}:= \inf_{\substack{u\in H^s(\R^d)\\u \ne 0}} J^{d, p,s}(u)
\label{CGNS}
\end{align}

\noi
is attained 
at a function  $Q\in H^s (\R^d) $.
\end{theorem}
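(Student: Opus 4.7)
The plan is to produce an optimizer $Q$ by a concentration--compactness argument applied to a rescaled minimizing sequence. First, I would observe that $J^{d,p,s}$ in \eqref{JJ1} is invariant under the two-parameter scaling $u(x)\mapsto \lambda\, u(\mu x)$, which follows from a direct check that the homogeneities of the three norms $\|\cdot\|_{\dot H^s(\R^d)}$, $\|\cdot\|_{L^2(\R^d)}$, and $\|\cdot\|_{L^p(\R^d)}$ balance precisely in the exponents appearing in \eqref{JJ1} (indeed, one verifies that $\tfrac{(p-2)d}{2s}+2+\tfrac{p-2}{2s}(2s-d)=p$). Hence, given any minimizing sequence $\{u_n\}\subset H^s(\R^d)\setminus\{0\}$, I would choose $(\lambda_n,\mu_n)$ so as to normalize $\|u_n\|_{L^2(\R^d)}=\|u_n\|_{\dot H^s(\R^d)}=1$, reducing \eqref{CGNS} to finding $Q\in H^s(\R^d)$ with these same constraints satisfied and $\|Q\|_{L^p(\R^d)}^p=C_{\GNS}$. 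The positivity of $C_{\GNS}$ in \eqref{CGNS} is ensured by \eqref{GNSdisp}, which is in turn valid in the stated range via the Sobolev embedding $H^s(\R^d)\hookrightarrow L^p(\R^d)$.

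Next, I would apply P.-L.~Lions' concentration--compactness principle to the probability measures $|u_n|^2\,dx$, obtaining the usual trichotomy of vanishing, dichotomy, and tightness up to translation. Vanishing is ruled out by the standard lemma that $\sup_{y\in\R^d}\int_{B(y,R)}|u_n|^2\,dx \to 0$ for every $R>0$ would force $\|u_n\|_{L^p(\R^d)}\to 0$, contradicting the positivity of $C_{\GNS}$. Dichotomy would split the mass into two asymptotically separated pieces with masses $\alpha\in(0,1)$ and $1-\alpha$; after rescaling each piece back to unit $L^2$ and $\dot H^s$ norms and invoking the two-parameter scaling invariance of $J^{d,p,s}$, one derives a strict subadditivity contradiction with the minimality of $\{u_n\}$. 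Hence we are in the tight alternative: there exist translations $y_n\in\R^d$ such that $\wt u_n:=u_n(\cdot - y_n)\rightharpoonup Q$ weakly in $H^s(\R^d)$, and tightness combined with local compactness of $H^s\hookrightarrow L^p_{\mathrm{loc}}$ upgrades this to strong convergence in $L^p(\R^d)$. Weak lower semicontinuity of $\|\cdot\|_{L^2}$ and $\|\cdot\|_{\dot H^s}$ then yields $J^{d,p,s}(Q)\le C_{\GNS}^{-1}$, and equality forces $Q$ to be the desired optimizer (and in particular the weak $H^s$-convergence to be strong).

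The step I expect to be most delicate is excluding dichotomy, precisely because the fractional kinetic energy $\|D^s u\|_{L^2}^2$ is \emph{non-local}: there is no clean spatial decomposition $\|D^s(v+w)\|_{L^2}^2\approx\|D^s v\|_{L^2}^2+\|D^s w\|_{L^2}^2$ under a smooth cut-off, so quantifying the asymptotic $\dot H^s$-orthogonality of the two separated components requires careful commutator estimates on $[D^s,\chi]$ of Kato--Ponce type. This is the same non-locality of $D^s$ that the main paper has to confront on $\T^d$, where it is bypassed via a difference-operator characterization of the $\dot H^s$-norm; on $\R^d$ an alternative route avoiding dichotomy altogether is to use the fractional P\'olya--Szeg\H o inequality to reduce to radially decreasing minimizing sequences and then invoke the radial compactness of $H^s_{\mathrm{rad}}(\R^d)\hookrightarrow L^p(\R^d)$ (for $d\ge 2$), a strategy well-suited to the framework of \cite{BFV14}.
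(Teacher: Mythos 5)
The paper offers no proof of Theorem~\ref{THM:fGNS}: it is quoted verbatim as Theorem 2.1 of \cite{BFV14}, so there is no internal argument to compare against. Your concentration--compactness sketch is a legitimate route, and you correctly flag the dichotomy alternative as the delicate point for a non-local kinetic energy. It is, however, heavier machinery than what \cite{BFV14} actually deploy. After the two-parameter normalization $\|u_n\|_{L^2(\R^d)}=\|u_n\|_{\dot H^s(\R^d)}=1$ (exactly as you set it up), they invoke a Lieb-type translation compactness lemma to produce a non-trivial weak limit $Q$ of the translated sequence, then run the Brezis--Lieb decomposition on all three norms simultaneously: writing $\alpha=\|Q\|_{\dot H^s}^2$, $\beta=\|Q\|_{L^2}^2$ and $a=\tfrac{(p-2)d}{2s}$, $b=2+\tfrac{(p-2)(2s-d)}{2s}$ (so $a+b=p>2$), the Brezis--Lieb identities force
\begin{align*}
C_{\GNS} \le C_{\GNS}\Big(\alpha^{a/2}\beta^{b/2}+(1-\alpha)^{a/2}(1-\beta)^{b/2}\Big)+o(1),
\end{align*}
and the strict superadditivity of $(\alpha,\beta)\mapsto\alpha^{a/2}\beta^{b/2}$ on $[0,1]^2$ for $a+b>2$ pins $(\alpha,\beta)=(1,1)$, so $u_n\to Q$ strongly and $Q$ is an optimizer. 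This bypasses the whole dichotomy/commutator discussion you are (rightly) worried about: the scaling structure of the quotient is exploited only at the level of the scalar quantities $\|\cdot\|_{\dot H^s}^2,\|\cdot\|_{L^2}^2$, never on spatially cut-off pieces, so no estimate on $[D^s,\chi]$ is ever required.

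Two smaller points. Your alternative via the fractional P\'olya--Szeg\H{o} inequality is also viable, and in fact the caveat ``for $d\ge 2$'' is unnecessary: for a symmetric-decreasing profile the pointwise bound $|u^*(x)|\lesssim |x|^{-d/2}\|u\|_{L^2}$ gives $L^p$-tightness for any $p>2$ in all dimensions $d\ge 1$, which together with local Rellich compactness closes the argument. That said, \cite{BFV14} deliberately avoid rearrangement so that the same proof covers settings (e.g.\ anisotropic or non-local potentials) where P\'olya--Szeg\H{o} is unavailable. Finally, be a little careful with the endpoint $p=\tfrac{2d}{d-2s}$ when $d>2s$: there $b=0$, the quotient collapses to the Sobolev quotient, and the Aubin--Talenti profile lies in $L^2(\R^d)$ only for $d>4s$; the existence of a minimizer in $H^s(\R^d)$ at that endpoint therefore needs separate care. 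This does not affect the present paper, which only uses $p=\tfrac{4s}{d}+2$, strictly below $\tfrac{2d}{d-2s}$.
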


\begin{remark}\rm
\label{RMK:scaling}
It is easy to see that functions $u (x): = c Q(b(x-a))$ for all $c \in \R \backslash \{0\}$, $b >0$, and $a \in \R^d$, are minimizers of the functional 
\eqref{JJ1}.
Therefore, 
we may assume that
\begin{align}
\label{scaling}
\begin{split}
\| Q \|_{L^2 (\R^d)} & = \| Q \|_{\dot H^s (\R^d)}, \\ 
\| Q\|^2_{\dot H^s (\R^d)} & = \frac2p \|Q\|_{L^p(\R^d)}^p .
\end{split}
\end{align}

\noi
Under this specified scaling, we have $H_{\R^d}(Q) = 0$,
where $H_{\R^d}$ is the Hamiltonian functional given 
in \eqref{Hamiltonian} with $\T^d$ being replaced by $\R^d$. 
Furthermore, 
this $Q$ solves the following semilinear elliptic equation on $\R^d$:
\begin{align}\label{elliptic}
 (p-2) d D^{2s} Q + (4s + (p-2)(2s-d)) Q - 4s Q^{p-1} = 0 .
\end{align}

\noi
In the following,
we restrict ourselves to \eqref{scaling} 
unless specified otherwise. 
In particular, we have
\begin{align}
\label{CGNS1}
C_{\textup{GNS}} = \frac{p}2 \|Q\|_{L^2 (\R^d)}^{2-p}.
\end{align}

\noi
% Proposition \ref{THM:fGNS} is known for $s\in (0,1)$ and $d=1$.
The uniqueness  \textup{(}in some sense\textup{)} of this $Q$ for fractional value $s$
is a very challenging problem,
which is only proved for some special cases, for instance when $d=1$ and $s\in (0,1]$. See \cite{FrankL}.
\end{remark}

%\begin{remark}\rm
%% Proposition \ref{THM:fGNS} is known for $s\in (0,1)$ and $d=1$.
%It is also conjectured that this $Q$ is a positive, radial, and exponentially decaying  solution to the following semilinear elliptic equation \eqref{elliptic}
%with the minimal $L^2$-norm \textup{(}namely, the ground state\textup{)}.
%The conjecture is proved for some special cases, for instance when $d=1$ and $s\in (0,1]$, see \cite{FrankL}.
%\end{remark}

\medskip

For a function $u$ defined on $\T^d$,
we define the $\dot H^s (\T^d)$ norm via
\begin{align}
\label{HsT}
 \| u\|_{\dot H^s (\T^d)}^2 = \sum_{n \in \Z^d \backslash \{0\}} |n|^{2s} |\ft u(n)|^2. 
\end{align}

\noi
Due to the scaling invariance of the minimization problem \eqref{JJ1},
it is expected that the GNS inequality \eqref{GNSdisp} also holds on the finite domains $\T^d$ with the same optimal constants.

\begin{proposition}
\label{PROP:T}
Let $d \geq 1$ and let \textup{(i)} $p > 2$ if $d < 2s$, 
and \textup{(ii)} $2 < p \le \frac{2d}{d-2s}$ if $d \geq 2s$.
Then, given small $\delta>0$,
there is a constant $C  = C(\dl) >0$ such that
\begin{align}
{\|u\|_{L^p(\T^d)}^{p}} \le (C_{\GNS}(d,p,s) +  \dl) \|u\|^{\frac{(p-2)d}{2s}}_{\dot H^s (\T^d)}\|u\|^{2+\frac{p-2}{2s}(2s-d)}_{L^2(\T^d)} + C(\dl) {\|u\|_{L^2(\T^d)}^{p}} 
\label{GNST}
\end{align}

\noi
for $u \in H^s(\T^d)$, where $C_{\GNS}$ is the constant defined in \eqref{CGNS}
and \eqref{CGNS1}.
\end{proposition}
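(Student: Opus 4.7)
The plan is to transfer the sharp GNS inequality on $\R^d$ (Theorem~\ref{THM:fGNS}) to $\T^d$ by a localization argument, taking advantage of the scale-invariance of the $\R^d$ inequality: multiply $u$ by a small bump whose shifts form an approximate partition of unity on $\T^d$, apply the sharp $\R^d$ GNS to each localized piece, and compare the resulting fractional Sobolev norms via a nonlocal difference-quotient characterization of $\dot H^s$.

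Fix a small parameter $r>0$ (to be tuned at the end in terms of $\dl$) and a smooth partition of unity $\{\chi_j\}_{j=1}^N$ on $\T^d$ satisfying $\sum_j \chi_j^p \equiv 1$, $\supp\chi_j\subset B(x_j,r)$, and $\|\chi_j\|_{C^k}\lesssim r^{-k}$. Then
\begin{align*}
\|u\|_{L^p(\T^d)}^p
= \sum_j \|u\chi_j\|_{L^p(\R^d)}^p
\le C_{\GNS}\sum_j \|u\chi_j\|_{\dot H^s(\R^d)}^{\alpha}\|u\chi_j\|_{L^2(\R^d)}^{\beta},
\end{align*}
where $\alpha=(p-2)d/(2s)$, $\beta=p-\alpha$, and Theorem~\ref{THM:fGNS} has been applied to each $u\chi_j$ viewed as a compactly supported function on $\R^d$. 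Combining a H\"older/AM-GM step in $j$ with the pointwise bound $\|u\chi_j\|_{L^2}\le\|u\|_{L^2}$ (and treating the three regimes $\alpha<2$, $\alpha=2$, $\alpha>2$ separately, using in the supercritical case the trivial estimate $A_j\le(\sum_k A_k^2)^{1/2}$) reduces matters to controlling $\sum_j \|u\chi_j\|_{\dot H^s(\R^d)}^2$ in terms of $\|u\|_{\dot H^s(\T^d)}^2$ plus an $L^2$ remainder.

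The crux is the sharp estimate
\begin{align*}
\sum_j \|u\chi_j\|_{\dot H^s(\R^d)}^2 \le (1+\varepsilon)\,\|u\|_{\dot H^s(\T^d)}^2 + C(\varepsilon)\,\|u\|_{L^2(\T^d)}^2.
\end{align*}
To prove this, I would use the difference-quotient representation, for any integer $m>s$,
\begin{align*}
\|v\|_{\dot H^s(\R^d)}^2 = c_{d,m,s}\int_{\R^d}\!\int_{\R^d}\frac{|\Delta_h^m v(x)|^2}{|h|^{d+2s}}\,dh\,dx,
\end{align*}
together with its $\T^d$-analogue $\|u\|_{\dot H^s(\T^d)}^2 = c_{d,m,s}\int_{\T^d}\!\int_{\R^d}|\Delta_h^m u(x)|^2|h|^{-d-2s}\,dh\,dx$, which follows from Plancherel after computing $\int_{\R^d}|e^{2\pi in\cdot h}-1|^{2m}|h|^{-d-2s}\,dh = c_{d,m,s}|n|^{2s}$. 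Expanding via the iterated Leibniz rule
\begin{align*}
\Delta_h^m(u\chi_j)(x) = \sum_{k=0}^m\binom{m}{k}(\Delta_h^k u)(x)\,(\Delta_h^{m-k}\chi_j)(x+kh),
\end{align*}
the top-order term $k=m$, after summing over $j$ and using $\sum_j\chi_j(y)^2\le 1+O(r)$, yields the principal contribution $(1+\varepsilon)\|u\|_{\dot H^s(\T^d)}^2$. The remaining cross terms ($k<m$) satisfy $|\Delta_h^{m-k}\chi_j|\lesssim\min(|h|^{m-k}r^{-(m-k)},\,1)$ and are absorbed into $C(\varepsilon)\|u\|_{L^2}^2$ via Young's inequality and interpolation between $L^2$ and $\dot H^s$.

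The main obstacle is this sharp-constant estimate: the partition $\{\chi_j\}$ must be a smoothed tiling with $\sum_j\chi_j^2\approx 1$ so that the prefactor of $\|u\|_{\dot H^s(\T^d)}^2$ becomes exactly $1+O(\dl)$, while simultaneously the cross-term errors --- which degrade as the partition becomes less smooth --- remain controlled; balancing $r$ against the smoothing scale is the delicate part. A final elementary inequality $(a+b)^{\alpha/2}\le (1+\varepsilon)a^{\alpha/2}+C(\varepsilon)b^{\alpha/2}$ (via concavity if $\alpha\le 2$, Young's inequality if $\alpha>2$) then converts the additive $\dot H^s$ error into the desired additive $\|u\|_{L^2}^p$ error in the final GNS bound.
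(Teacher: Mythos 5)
Your overall strategy—localize $u$ by compactly supported cutoffs, apply the sharp $\R^d$ GNS inequality from Theorem~\ref{THM:fGNS}, and transfer the resulting $\dot H^s(\R^d)$ norms back to $\dot H^s(\T^d)$ via the difference-quotient characterization of the homogeneous Sobolev norm—is exactly the route the paper takes, and the Leibniz expansion of $\Delta_h^m(u\chi_j)$ with the top term $k=m$ carrying the sharp constant and the lower terms absorbed via interpolation into $L^2$ is also the same. However, there is a genuine gap in the way you set up the localization, and it is not the kind of gap that can be closed by ``tuning $r$ against the smoothing scale'' as you suggest.

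The problem is that the two normalizations you impose on the partition of unity are mutually incompatible. You want $\sum_j \chi_j^p \equiv 1$ so that $\|u\|_{L^p(\T^d)}^p = \sum_j\|u\chi_j\|_{L^p}^p$ exactly, and simultaneously $\sum_j \chi_j(y)^2 \le 1 + O(r)$ so that the top-order term in the difference-quotient expansion reproduces $\|u\|_{\dot H^s(\T^d)}^2$ with coefficient $1+O(r)$. But since each $\chi_j$ takes values in $[0,1]$ and $p>2$, one has $\chi_j^p\le\chi_j^2$ pointwise, so $\sum_j\chi_j^p\equiv 1$ forces $\sum_j\chi_j^2\ge 1$, and in any transition region where (say) two bumps genuinely overlap with $\chi_1^p+\chi_2^p=1$, the supremum of $\chi_1^2+\chi_2^2$ is $2^{1-2/p}$, a fixed constant strictly greater than $1$ that does \emph{not} shrink as the overlap is made thinner. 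Taking $u$ concentrated near such an overlap region shows that the top-order contribution to $\sum_j\|u\chi_j\|_{\dot H^s(\R^d)}^2$ can be as large as $2^{1-2/p}\,\|u\|_{\dot H^s(\T^d)}^2$ up to lower-order $L^2$ errors, so the claimed bound $\sum_j\|u\chi_j\|_{\dot H^s(\R^d)}^2\le(1+\varepsilon)\|u\|_{\dot H^s(\T^d)}^2+C(\varepsilon)\|u\|_{L^2}^2$ fails for small $\varepsilon$. Trying instead to normalize $\sum_j\chi_j^2\equiv 1$ gives $\sum_j\chi_j^p\le 1$, and now the $L^p$ step goes the wrong way; the discrepancy $\int|u|^p(1-\sum_j\chi_j^p)\,dx$ is supported on the overlap set and can be a constant fraction of $\|u\|_{L^p(\T^d)}^p$ for $u$ concentrated there, with no way to absorb it into the allowed $C(\delta)\|u\|_{L^2}^p$ remainder.

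The missing ingredient is a recentering/averaging step. The paper uses a \emph{single} cutoff $\phi_\delta$ that equals $1$ outside an $O(\delta)$-collar of the fundamental domain's boundary, and first proves a pigeonhole lemma: there exists a translate $x_0\in\T^d$ such that $\|u\|_{L^p(\T^d)}^p\le(1+C\delta)\|\phi_\delta(\cdot)u(\cdot+x_0)\|_{L^p(\R^d)}^p$. The shift $x_0$ is chosen so that the collar region captures only an $O(\delta)$ fraction of $\int_{\T^d}|u|^p$, which is what lets the single-cutoff decomposition lose only a factor $1+O(\delta)$ in $L^p$ while keeping $\phi_\delta\le 1$ (so the top-order $\dot H^s$ term picks up coefficient at most $1$). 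Without such a translation argument, the localization loses a fixed constant either in $L^p$ or in $\dot H^s$, regardless of the fineness or smoothness of the partition, and the sharp constant $C_{\GNS}+\delta$ in \eqref{GNST} cannot be recovered.
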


The main difficulty in showing  Proposition \ref{PROP:T} is due to the non-local nature of the fractional derivatives.
To circumvent this difficulty, 
we recall the characterization of the $\dot H^s (\R^d)$ norm 
\eqref{HsR} based on  the $L^2$-modulus of continuity. 
When $0<s <1$, one has
\begin{align}
\label{modulus}
\begin{split}
\int_{\R^d}\int_{\R^d} \frac{|u(x) -u(y)|^2}{|x-y|^{d+2s}} dxdy & = 
\int_{\R^d}\int_{\R^d} \frac{|u(x+y) -u(x)|^2}{|y|^{d+2s}} dxdy\\
%& = \int_{\R^d}\int_{\R^d} \frac{|e^{2\pi i y \cdot \xi } -1|^2}{|y|^{d+2s}} dy |\ft u (\xi)|^2d \xi\\
& = \int_{\R^d} \bigg( |\xi|^{-2s} \int_{\R^d} \frac{|e^{2\pi i y \cdot \xi } -1|^2}{|y|^{d+2s}} dy \bigg)  |\xi|^{2s} |\ft u (\xi)|^2d \xi.
\end{split}
\end{align}

\noi
Denote the inner integral, 
a convergent improper integral for $0<s<1$, 
by
\begin{align}
\label{c_1}
c_{1}(d,s) = |\xi|^{-2s} \int_{\R^d} \frac{|e^{2\pi i y \cdot \xi } -1|^2}{|y|^{d+2s}} dy \,\,\,\bigg(= \int_{\R^d} \frac{|e^{2\pi i x_1 } -1|^2}{|x|^{d+2s}} dx\bigg),
\end{align}

\noi
which is a constant, i.e. independent of $\xi$.
From \eqref{modulus} and \eqref{c_1}, 
we have the following characterization of 
the $\dot H^s (\R^d)$ norm in \eqref{HsR} (see for example \cite{BO}), %\cite{Hormander}:
\begin{align}
    \| u\|_{\dot H^s (\R^d)}^2 =   c_1 (d,s)^{-1} \int_{\R^d}\int_{\R^d} \frac{|u(x) -u(y)|^2}{|x-y|^{d+2s}} dxdy.
    \label{Horm}
\end{align}

\noi
We remark that on the torus $\T^d$ the $H^s (\T^d)$ norm defined in 
\eqref{HsT} has a similar equivalent characterization. See \cite[Proposition 1.3]{BO}.
However, the identity as \eqref{Horm} fails for the torus case 
due to the lack of rotational invariance.

%\noi
%where the $\dot H^s (\R^d)$ is given in \eqref{HsR}.
%\begin{align}
%\label{c1}
%c_1(d,s) = \int_{\R^d} \frac{|e^{2\pi i x_1 } -1|^2}{|x|^{d+2s}} dx.
%\end{align}

By using high order difference operators,
we may generalize \eqref{Horm} to the cases $s\ge 1$. 
In particular, we have 
%the following characterization of the $\dot H^s (\R^d)$ norm
\begin{align}
    \| u\|_{\dot H^s (\R^d)}^2 =   c_k (d,s)^{-1} \int_{\R^d}\int_{\R^d} \frac{|\Delta_y^k u(x)|^2}{|y|^{d+2s}} dxdy,
    \label{Horm2}
\end{align}

\noi
where $\Delta_y^k$ is the $k$-th forward difference operator with spacing $y$ defined by
\begin{align}
\label{difference}
\Delta_y^k u(x) = \sum_{j=0}^k (-1)^{k-j} C_k^j u(x+jy),
\end{align}

\noi
where $C_k^j$ are binomial coefficients, and
\begin{align}
    \label{c_k}
c_k(d,s) = \int_{\R^d} \frac{|e^{2\pi i x_1 } -1|^{2k}}{|x|^{d+2s}} dx.
\end{align}

\noi
The proof of \eqref{Horm2} is similar to that of \eqref{Horm}. We thus omit the details.

\medskip

Now we are ready to prove Proposition \ref{PROP:T}.

% We define the fractional derivative as on tours as
% \begin{align}
%     D^s u (x) = - p.v. \int_{\T^d} \frac{u(x+z) - u(x)}{|z|^{d+2s}} dz \label{Ds1}
% \end{align}

% \noi
% By substituting $z = -z$ in the above equality we have
% \begin{align}
%     D^s u (x) = - p.v. \int_{\T^d} \frac{u(x-z) - u(x)}{|z|^{d+2s}} dz \label{Ds2}
% \end{align}

% \noi
% and by summing over them we get
% \begin{align}
% \begin{split}
%     D^s u (x) & = - \frac12 \bigg( p.v. \int_{\T^d} \frac{u(x+z) - u(x)}{|z|^{d+2s}} dz  + p.v. \int_{\T^d} \frac{u(x-z) - u(x)}{|z|^{d+2s}} dz \bigg)\\
%     & = -\frac12 p.v. \int_{\T^d} \frac{u(x+z) + u(x-z) - 2 u(x)}{|z|^{d+2s}} dz
%     \end{split}
%     \label{Ds3}
% \end{align}

% \medskip

% Now we are ready to prove Proposition \ref{PROP:T}.

\begin{proof}[Proof of Proposition \ref{PROP:T}]
% When $s$ is a positive integer, 
% the proof is similar to the case of $s=1$.
% In the following, we only consider $s$ being a non-integer.
For the pedagogical purpose,
we present the proof for the case $0<s<1$
before demonstrating the general case $s > 0$,
as the former is less complex in terms of notation.

We first consider $0<s<1$. 
% In this region, we will prove a stronger inequality than \eqref{GNST}, 
% \begin{align}
% \label{GNSTa}
%     {\|u\|_{L^p(\T^d)}^{p}} \le C_{\GNS}(d,p,s) \|D^s u\|^{\frac{(p-2)d}{2s}}_{L^2(\T^d)}\|u\|^{2+\frac{p-2}{2s}(2s-d)}_{L^2(\T^d)},
% \end{align}
% \noi
% by using \eqref{Horm}. 
Let $\psi \in C_0^\infty (B(0,\frac12))$ be a bump function with $\|\psi\|_{L^1} = 1$
and $\psi_\delta (x) = \dl^{-d} \psi(\frac{x}\dl)$.
Define $\phi_\dl (x) = \ind_{[-\frac12 + 2\dl, \frac12 - 2\dl]^d} * \psi_\dl (x)$.
Then the following properties hold 
\begin{itemize}
    \item[(i)] $\phi_\dl \in C^\infty_0 (\T^d)$,
    \item[(ii)] $\phi_\dl(x) = 1$ for $x\in [-\frac12 + 3\dl, \frac12 - 3\dl]^d$,
    \item[(iii)] $\phi_\dl(x) = 0$ for $x\in ([-\frac12 + \dl, \frac12 - \dl]^d)^c$,
    \item[(iv)] $|D^s \phi_\dl (x)| \les \dl^{-s}$ for all $x \in \R^d$.
\end{itemize} 
% We can view a periodic function as a function defined on
% $\R^d$ that equals $0$ outside $[-\frac12,\frac12]^d$, say
Let
\begin{align}
\label{extension}
u_\delta (x) = 
\begin{cases}
\phi_\delta (x) u (x), & x \in [-\frac12,\frac12]^d;\\
0, & \textup{otherwise}.
\end{cases}
\end{align}

First we claim there exists $C(d) >0$ such that for any $u \in L^p(\T^d)$ there exists $x_0 \in \T^d$ satisfying the following 
\begin{align}
\| u \|_{L^p (\T^d)}^p \le (1+ C (d) \dl) \| \phi_\dl (\cdot) u(\cdot + x_0)  \|_{L^p (\R^d)}^p .
\label{recenter0}
\end{align}

\noi
From the definition of $\phi_\dl$, it suffices to show
\begin{align}
\| u \|_{L^p (\T^d)}^p \le (1+ C (d) \dl) \| u(\cdot + x_0)  \|_{L^p ( [-\frac12 + 3\dl, \frac12 - 3\dl]^d)}^p .
\label{recenter}
\end{align}

\noi
We show \eqref{recenter} inductively. 
Recall that $\dl \ll 1$.
When $d = 1$, we may split the interval $[-\frac12, \frac12]$ into $k = [\frac1{6\dl}]$ many equal subintervals. 
Then, from the pigeonhole principle, there must be a subinterval, say the $j$-th subinterval $[-\frac12 + \frac{j-1}k,  -\frac12 + \frac{j}k]$, such that
\[
\int_{[-\frac12 + \frac{j-1}k,  -\frac12 + \frac{j}k]} |u (x )|^p dx \le \frac1{k} \int_{\T} |u(x)|^p dx,
\] 

\noi
which implies 
\[
\begin{split}
\int_{\T} |u(x)|^p dx & \le (1+ \frac1k) \int_{-\frac12 + \frac1{2k}}^{\frac12 - \frac1{2k}} \Big|u\Big(x + \frac{2j-1}{2k}\Big) \Big|^p dx\\
& \le (1+ 12 \dl) \int_{-\frac12 + 3\dl}^{\frac12 - 3\dl} \Big|u\Big(x + \frac{2j-1}{2k}\Big) \Big|^p dx,
\end{split}
\]

\noi
provided $\dl$ is sufficiently small.
Thus we conclude \eqref{recenter} for $d=1$.
Let us assume \eqref{recenter} holds for all $1,2,\cdots, d-1$ dimensions.
Then for $x \in \T^d$, we may write $x = (x', x_d)$ such that $x' \in \T^{d-1}$ and $x_d \in \T$. 
Then, from our assumption, 
there exist $x_d^0 \in \T$ and $x'_0 \in \T^{d-1}$ such that
\[
\begin{split}
\int_{\T^d} & |u(x)|^p dx  = \int_\T \Big( \int_{\T^{d-1}} |u(x',x_d)|^p dx' \Big) dx_d\\
& \le (1+ C\dl) \int_{-\frac12 + 3\dl}^{\frac12 - 3\dl}  \Big( \int_{\T^{d-1}} |u(x',x_d + x_d^0)|^p dx' \Big) dx_d\\
& \le (1+ C\dl)  \int_{\T^{d-1}}  \Big( \int_{-\frac12 + 3\dl}^{\frac12 - 3\dl} |u(x',x_d + x_d^0)|^p dx_d \Big) dx'  \\
& \le (1+ C (1) \dl) \big(1+ C (d-1) \dl \big)    \int_{[-\frac12 + 3\dl, \frac12 - 3\dl]^{d-1}}  \Big( \int_{-\frac12 + 3\dl}^{\frac12 - 3\dl} |u(x' + x'_0,x_d + x_d^0)|^p dx_d \Big) dx' ,
\end{split}
\]

\noi
where we used the assumption in the second and fourth steps.
Thus we finish the proof of \eqref{recenter} for $d$ dimension by taking $x_0 = (x_0',x_d^0)$ and $C(d) = 1+ (C(1) + 2 C(d-1))$ provided $C(1) \dl < 1$.

From \eqref{recenter0}, for the translated $u (\cdot + x_0)$,
still denoting by $u$\footnote{We note that \eqref{GNST} is invariant under translation.},
we have
\begin{align}
% \begin{split}
    \|&u\|^p_{L^p (\T^d)}  \le  (1+ C \dl) \| u_\dl\|^p_{L^p (\R^d)} \notag\\ 
    & \le (1+ C \dl)  C_{\textup{GNS}}(d,p,s)\| u_\dl \|^{\frac{(p-2)d}{2s}}_{\dot H^s (\R^d)}\|u_\dl\|^{2 + \frac{p-2}{2s}(2s-d)}_{L^2(\R^d)} \notag\\
    & \le  (1+ C \dl)  C_{\textup{GNS}}(d,p,s)\bigg( c_1(d,s)^{-1} \int_{\R^d}\int_{\R^d} \frac{|u_\dl (x) - u_\dl (y)|^2}{|x-y|^{d+2s}} dxdy \bigg)^{\frac{(p-2)d}{4s}}\| u\|^{2+\frac{p-2}{2s}(2s-d)}_{L^2(\T^d)}. \notag
\end{align}

\noi
To prove \eqref{GNST}, 
it only needs to show
\begin{align}
    \label{GNSTa}
    \int_{\R^d}\int_{\R^d} \frac{|u_\dl (x) - u_\dl (y)|^2}{|x-y|^{d+2s}} dxdy
    \le (1+ C\delta ) c_1 (d,s) \|u\|^2_{H^s (\T^d)} + C(\dl) \|u\|^2_{L^2(\T^d)}.
\end{align}

\noi
Since the integrand $\frac{|u_\dl(x) - u_\dl (y)|^2}{|x-y|^{d+2s}}$ in \eqref{GNSTa} is supported on $(x,y)\in (\T^d \times \R^d) \cup (\R^d \times \T^d)$, we have  
\begin{align} 
\begin{split}
    \textup{LHS of } \eqref{GNSTa} & \le    \int_{\T^d}\int_{\T^d} \frac{| u_\dl (x) -u_\dl(y)|^2}{|x-y|^{d+2s}} dxdy  + C   \int_{(\T^d)^c}\int_{\T^d} \frac{| u_\dl (x)|^2}{|x-y|^{d+2s}} dxdy .
    % & = \textup{I} + \textup{I\!I}.
 \end{split}
 \label{GNSTb}
\end{align}
    
\noi
For the second term in \eqref{GNSTb},
since $|x-y| > \delta$ in the integrand, 
we have
\begin{align}
    \label{II}
\int_{(\T^d)^c}\int_{\T^d} \frac{| u_\dl (x)|^2}{|x-y|^{d+2s}} dxdy  \les \bigg(  \int_{|y|>\delta} \frac{1}{|y|^{d+2s}} dy  \bigg) 
\|u_\dl \|_{L^2(\T^d)}^2
\les \dl^{-2s} \|u\|_{L^2 (\T^d)}^2,
\end{align}

\noi
which is sufficient for \eqref{GNSTa}.
Now we turn to the first term in \eqref{GNSTb}.
We note
\begin{align*} 
         | u_\dl(x) -  &u_\dl(y)|^2 
         = |\phi_\dl (x) (u(x) - u(y)) + (\phi_\dl (x) - \phi_\dl (y)) u(y)|^2 \notag\\
        & = |\phi_\dl (x) (u(x) - u(y))|^2 +  |(\phi_\dl (x) - \phi_\dl (y)) u(y)|^2 \\
        & \hphantom{XX} + 2 \phi_\dl (x)  (\phi_\dl (x) - \phi_\dl (y)) (u(x) - u(y)) u(y).
\end{align*}

\noi
Thus we have
\begin{align}
    \begin{split}
         & \int_{\T^d}\int_{\T^d} \frac{| u_\dl (x) -u_\dl(y)|^2}{|x-y|^{d+2s}} dxdy  
           \le  \int_{\T^d}\int_{\T^d} \frac{| u(x) -u(y)|^2}{|x-y|^{d+2s}} dxdy, \\
         & \hphantom{XX}+  \int_{\T^d}\int_{\T^d} \frac{|\phi_\dl (x)  (\phi_\dl (x) - \phi_\dl (y)) (u(x) - u(y)) u(y)|}{|x-y|^{d+2s}} dxdy, \\
         & \hphantom{XX}+ \int_{\T^d}\int_{\T^d} \frac{ |(\phi_\dl (x) - \phi_\dl (y)) u(y)|^2}{|x-y|^{d+2s}} dxdy \\
         & = A_1 + A_2 + A_3.
    \end{split}
    \label{A}
\end{align}

\noi
For the term $A_1$,
we have
 \begin{align}
 \begin{split} 
 A_1 & \le \int_{\T^d}\int_{B(0,2)} \frac{|u(x) -u(x+z)|^2}{|z|^{d+2s}} dzdx\\
% & = \sum_{n \in \Z^d \backslash \{0\}} \int_{B(0,2)} \frac{|e^{2\pi i z \cdot n} -1|^2}{|z|^{d+2s}} dx |\ft u (n)|^2\\
 & = \sum_{n \in \Z^d \backslash \{0\}} 
 \bigg( |n|^{-2s} \int_{B(0,2)} \frac{|e^{2\pi i x \cdot n} -1|^2}{|x|^{d+2s}} dx\bigg)
 |n|^{2s}  |\ft u (n)|^2,
 \end{split}
\label{A1}
\end{align}

\noi
where $B(0,2) \subset \R^d$ is the ball centered at $0$ with radius $2$.
It is easy to see that  
\begin{align*}
%\label{cds}
 |n|^{-2s} \int_{B(0,2)} \frac{|e^{2\pi i x \cdot n} -1|^2}{|x|^{d+2s}} dx  \le  |n|^{-2s} \int_{\R^d} \frac{|e^{2\pi i x \cdot n} -1|^2}{|x|^{d+2s}} dx = {c_1(d,s)},
\end{align*}

\noi
which together with  \eqref{A1} shows the contribution from $A_1$ is bounded by the right hand side of \eqref{GNSTa}.
For the term $A_3$, we have
\begin{align}
    \begin{split}
        A_3 \les \delta^{-2}\int_{\T^d}\int_{\T^d} \frac{   |u(y)|^2}{|x-y|^{d+2s-2}} dxdy \les \delta^{-2} \|u\|^2_{L^2 (\T^d)},
    \end{split}\label{A3}
\end{align}

\noi
which is sufficient for our purpose.
For $A_2$, by Young's inequality we have
\begin{align} 
\begin{split} 
        A_2 & \le \delta A_1 + \frac1\delta A_3,
    \end{split}
    \label{A2}
\end{align}

\noi
which is again acceptable.
By collecting 
\eqref{A}, \eqref{A1}, \eqref{A2}, and \eqref{A3},  we finish the proof of \eqref{GNSTa} and thus \eqref{GNST} when $0<s<1$.

%The proof for the case $s \ge 1$ is similar to that of $0<s<1$, but using \eqref{Horm2} instead of \eqref{Horm}.
In the following we consider the case $s\ge 1$.
Assume $s \in [k-1, k)$ for some $k\in \Z_+$.
Similarly to \eqref{GNSTa} in the case $0<s<1$,
it only needs to show
\begin{align}
    \label{generalka}
    \int_{\R^d}\int_{\R^d} \frac{| \Delta_y^k u_\dl (x)|^2}{|y|^{d+2s}} dydx \le (1+ C\delta ) c_k (d,s) \|u\|_{H^s (\T^d)}^2 + C(\dl) \|u\|_{L^2(\T^d)}^2.
\end{align}

\noi
Similarly to \eqref{II} and \eqref{A1},
we may reduce \eqref{generalka} to
\begin{align}
    \label{generalk}
    \int_{\T^d}\int_{B(0,k)} \frac{| \Delta_y^k u_\dl (x)|^2}{|y|^{d+2s}} dydx \le (1+ C\delta ) c_k (d,s) \|u\|_{H^s (\T^d)}^2 + C(\dl) \|u\|_{L^2(\T^d)}^2.
\end{align}

\noi
In the following, we prove \eqref{generalk}.
First note that
\begin{align*}
    \Delta_y^k u_\dl (x)  = \Delta_y^k ( \psi_\dl (x) u (x))
     = \sum_{j=0}^k C_k^j\Delta_y^{k-j} \psi_\dl (x) \Delta_y^{j} u(x+(k-j)y) .
\end{align*}

\noi
Therefore, we have
\begin{align}
    \label{generalk1}
    \begin{split}
    & \int_{\T^d}\int_{B(0,k)} \frac{| \Delta_y^k u_\dl (x)|^2}{|y|^{d+2s}}  dydx \\
    &  = \int_{\T^d}\int_{B(0,k)} \frac{| \sum_{j=0}^k C_k^j\Delta_y^{k-j} \psi_\dl (x) \Delta_y^{j} u(x+(k-j)y) |^2}{|y|^{d+2s}} dydx\\
    & = \int_{\T^d}\int_{B(0,k)} \frac{ |\psi_\dl (x) \Delta_y^{k} u(x) |^2}{|y|^{d+2s}} dydx \\ & \hphantom{} + \sum_{j=0}^{k-1} \int_{\T^d}\int_{B(0,k)} \frac{ |C_k^j\Delta_y^{k-j} \psi_\dl (x) \Delta_y^{j} u(x+(k-j)y) |^2}{|y|^{d+2s}} dydx \\
    & \hphantom{} + \sum_{j\neq \l} \int_{\T^d}\int_{B(0,k)} \frac{ C_k^j\Delta_y^{k-j} \psi_\dl (x) \Delta_y^{j} u(x+(k-j)y) C_k^\l\Delta_y^{k-\l} \psi_\dl (x) \Delta_y^{\l} u(x+(k- \l)y)}{|y|^{d+2s}} dydx \\
   & = B_1 + B_2 + B_3.
    \end{split}
\end{align}

\noi
For the term $B_1$ in \eqref{generalk1}, we have
\begin{align}
    \begin{split}
        B_1 & \le \int_{\T^d}\int_{B(0,k)} \frac{| \Delta_y^{k} u(x) |^2}{|y|^{d+2s}} dy dx\\
%        & = \int_{B(0,k)} \sum_{n\in \Z^d} \frac{ |\sum_{j=0}^k (-1)^{k-j} C_k^j e^{2\pi ijy\cdot n}|^2 |\ft u(n) |^2}{|y|^{d+2s}} dy\\
        & =  \sum_{n\in \Z^d}  \int_{B(0,k)} \frac{ |e^{2\pi i y\cdot n} - 1|^{2k} }{|y|^{d+2s}} dy |\ft u(n) |^2\\
%        & \le \sum_{n\in \Z^d} \bigg(  |n|^{-2s} \int_{\R^d} \frac{|e^{2\pi i y \cdot n} -1|^{2k}}{|y|^{d+2s}} dx \bigg) |n|^{2s}|\ft u(n) |^2\\
        & \le c_k (d,s) \sum_{n\in \Z^d}   |n|^{2s}|\ft u(n) |^2,
    \end{split}
    \label{B1}
\end{align}

\noi
where $c_k (d,s)$ is defined in \eqref{c_k}.
Thus the contribution of $B_1$ is bounded by the right hand side of \eqref{generalk}.
Similarly, we can control $B_2$ in \eqref{generalk1} as
\begin{align}
    \begin{split}
        B_2 & \les \sum_{j=0}^{k-1} \int_{\T^d}\int_{B (0,k)} \frac{ |\Delta_y^{k-j} \psi_\dl (x) \Delta_y^{j} u(x+(k-j)y) |^2}{|y|^{d+2s}} dy dx\\
        & \les \sum_{j=0}^{k-1} \delta^{-2(k-j)} \int_{B (0,k)}  \int_{\T^d}  \frac{ | \Delta_y^{j} u(x+(k-j)y) |^2}{|y|^{d+2s-2(k-j)}}  dx dy\\
        & \les \sum_{j=0}^{k-1} \delta^{-2(k-j)}  \int_{\T^d} \int_{B (0,k)}  \frac{ | \Delta_y^{j} u(x) |^2}{|y|^{d+2s-2(k-j)}}  dy dx\\
        & \les \sum_{j=0}^{k-1} \delta^{-2(k-j)} \|u\|^2_{\dot H^{s-k+j} (\T^d)}\\
        & \les \dl \|u\|^2_{\dot H^{s}(\T^d)} + C(\dl) \|u\|^2_{L^2 (\T^d)},
    \end{split}
    \label{B2}
\end{align}

\noi
where in the last step we used the interpolation between $L^2(\T^d)$ and $\dot H^s (\T^d)$. This shows that the contribution of $B_2$ is acceptable.

Finally, we turn to $B_3$ in \eqref{generalk1}. When $j< k$ and $\l <k$, by H\"older's  inequality we have
\[
\int_{\T^d}\int_{\T^d} \frac{ C_k^j\Delta_y^{k-j} \psi_\dl (x) \Delta_y^{j} u(x+(k-j)y) C_k^\l\Delta_y^{k-\l} \psi_\dl (x) \Delta_y^{\l} u(x+(k-\l)y)}{|y|^{d+2s}} dxdy \les B_2,
\]

\noi
which is bounded by \eqref{B2}.
Without loss of generality, we only consider the case $j=k$. 
Then we have $\l < k$.
By Young's inequality we have
\[
\int_{\T^d}\int_{\T^d} \frac{ \psi_\dl (x) \Delta_y^{k} u(x) C_k^\l\Delta_y^{k-\l} \psi_\dl (x) \Delta_y^{\l} u(x+(k-\l)y)}{|y|^{d+2s}} dxdy \les \delta B_1 + C(\dl) B_2,
\]

\noi
which is again sufficient for our purpose
in view of \eqref{B1} and \eqref{B2}.

We finish the proof of \eqref{generalk},
and thus the proposition.
\end{proof}

\begin{remark}\rm
\label{RMK:compare}
Let $u$ be a function defined on $\R^d$.
With a slight abuse of notation,
we also use $u$ to denote its restriction onto $\T^d$. 
%i.e. $u \ind_{\T^d}$.
It follows from \eqref{Horm2} and \eqref{c_k} that
\begin{align}
\label{sob_com}
\begin{split}
\| u \|^2_{\dot H^s (\T^d)} & = c_k^{-1} \sum_{n\in \Z^d} \bigg( \int_{\R^d} \frac{|e^{2\pi i y \cdot n} -1|^{2k}}{|y|^{d+2s}} dy \bigg) |\ft u(n) |^2 \\
& = c_k^{-1}  \int_{\T^d}\int_{\R^d} \frac{| \Delta_y^{k} u(x) |^2}{|y|^{d+2s}} dydx \\
& \le c_k^{-1}  \int_{\R^d}\int_{\R^d} \frac{| \Delta_y^{k} u(x) |^2}{|y|^{d+2s}} dxdy  = \| u\|^2_{\dot H^s (\R^d)},
\end{split}
\end{align}

\noi
where $k = [s]$ is the largest integer less than $s$.
%Similarly,
%\begin{align}
%\label{sob_com2}
%\begin{split}
%\jb{v,u}_{\dot H^s (\T^d)} & = c_k^{-1} \sum_{n\in \Z^d} \bigg(    \int_{\R^d} \frac{|e^{2\pi i y \cdot n} -1|^{2k}}{|y|^{d+2s}} dy \bigg)  \ft v (n) \cj{\ft u(n)} \\
%& = c_k^{-1} \int_{\R^d} \frac{\jb{ \Delta_y^{k} u , \Delta_y^{k} v }_{L^2(\T^d)}}{|y|^{d+2s}} dy\\
%& = c_k^{-1} \int_{\R^d} \frac{\jb{ u , \Delta_y^{2k} v }_{L^2(\T^d)}}{|y|^{d+2s}} dy
%\le  c_k^{-1} \|u\|_{L^\infty (\T^d)} 
%\int_{\R^d} \frac{ \|\Delta_y^{2k} v \|_{L^1(\R^d)}}{|y|^{d+2s}} dy\\
%& = c_k^{-1} \|u\|_{L^\infty (\T^d)} 
%\int_{\R^{2d}} \frac{| \Delta_y^{2k} v (x)|}{|y|^{d+2s}} dy dx
%= c_k^{-1} \|u\|_{L^\infty (\T^d)} \|v\|_{W^{2s,1} (\R^d)} .
%\end{split}
%\end{align}

\end{remark}

%\begin{lemma}
%Let $s>0$.
%Then, for $u \in \dot H^s (\T^d)$,
%we have
%\begin{align}
%\|u\|_{\dot H^s (\T^d)}^2 \sim \int_{\T^d} \int_{[-\frac12,\frac12)^d}
%\frac{|\Delta_y^k u(x) |^2}{|x|^{d+2s}} dydx
%\end{align}
%\end{lemma}

%\newpage

\section{Proof of Theorem \ref{THM:main}}
\label{SEC:subcritical}

In this section, 
we prove Theorem \ref{THM:main},
which provides sharp criteria for the normalizability of the
Gibbs measure \eqref{Gibbs}
with focusing interaction.

\subsection{Variational formulation}
In order to prove Theorem \ref{THM:main},
we recall a variational formula for the partition functional 
$Z_{s,p,K}$ as in \cite{OST1}.
Let $W(t)$ denote a mean zero cylindrical Brownian motion in $L^2(\T^d)$
\begin{align*}
%\label{Bro}
W(t) = \sum_{n\in \Z^d \backslash \{0\}} B_n (t) e_n
\end{align*}

\noi
where $\{B_n\}_{n \in \Z^d \backslash \{0\}}$ is a sequence of mutually independent complex-valued Brownian motions.
Then define a centered Gaussian process $Y_s(t)$ by
\begin{align}
\label{Yt}
Y_s(t) =  D^{-s} W(t) = \sum_{n\in \Z^d \backslash \{0\}} \frac{B_n (t)}{|n|^s} e_n.
\end{align}

\noi
We note that $Y_s(t)$ is well-defined and
\[
\E \big[|Y_s(1)|^2\big] = \sum_{n\in \Z^d \backslash \{0\}} \frac{\E [|B_n (1)|^2] }{|n|^s}  = \sum_{n\in \Z^d\backslash \{0\}} \frac{2}{|n|^s} < \infty,
\]

\noi
provided $s > \frac{d}2$.
In particular, 
we have 
\begin{align}
\label{law}
\textup{Law} (Y_s(1)) = \mu_s,
\end{align} 
where $\mu_s$ is the massless Gaussian free field given in \eqref{GPM}.

Let $\mathbb{H}_a$ be the space of drifts,
which consists of {\it mean zero} progressively measurable processes belonging to 
$L^2 ([0,1]; L^2 (\T^d))$,
$\P$-almost surely.
One of the key tools in this paper is the following Bou\'e-Dupuis variational formula \cite{BD,Ust,BD}.
See also \cite{BuD} for the infinite dimensional setting.

\begin{lemma}
\label{LEM:var}
Let $Y_s$ be as in \eqref{Yt} with $s> \frac{d}2$.
Suppose that $F: H^{s-\frac{d}2 -} (\T^d) \to \R$ is measurable
and bounded from above.
Then, we have
\begin{align}
\label{var}
-\log \E \Big[ e^{-F(Y_s(1))} \Big] = \inf_{\theta \in \mathbb{H}_a} \E
\bigg[F\big(Y_s(1) + I_s(\theta) (1) \big) + \frac12 \int_0^1 \| \theta (t) \|_{L^2_x}^2 dt\bigg],
\end{align}

\noi
where $I_s(\theta)$ is defined by
\[
I_s(\theta) (t) =   \int_0^t  D^{-s} P_{\neq 0} \theta (\tau) d\tau
\]

\noi
and
the expectation $\E = \E_\P$ is with respect to the underlying probability measure $\P$.
\end{lemma}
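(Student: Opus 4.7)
The statement is the infinite-dimensional Bou\'e--Dupuis variational formula, expressing the duality between the free energy $-\log\E[e^{-F}]$ and relative entropy along Cameron--Martin directions. My plan is to prove the two inequalities $\le$ and $\ge$ in \eqref{var} separately, both via Girsanov's theorem for the cylindrical Brownian motion $W$. A preliminary truncation $F\mapsto F\vee(-N)$, combined with monotone convergence on the left-hand side and boundedness of $F$ from above on the right-hand side, reduces matters to the case $F$ bounded, which I assume in what follows.

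\medskip

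\noindent\textbf{Upper bound ($\le$).} I will fix $\theta\in\Ha$ (truncated so that $\int_0^1\|\theta(t)\|_{L^2_x}^2\,dt$ is bounded and Novikov's condition is satisfied) and introduce the equivalent probability measure $\mathbb{Q}_\theta$ defined by
\begin{align*}
\frac{d\mathbb{Q}_\theta}{d\P}=\exp\!\Big(\int_0^1\langle\theta(t),dW(t)\rangle_{L^2_x}-\tfrac12\int_0^1\|\theta(t)\|_{L^2_x}^2\,dt\Big).
\end{align*}
By Girsanov's theorem, $\widetilde W(t):=W(t)-\int_0^t\theta(\tau)\,d\tau$ is a cylindrical Brownian motion under $\mathbb{Q}_\theta$; applying $D^{-s}P_{\ne 0}$ at $t=1$ then shows that under $\mathbb{Q}_\theta$, $Y_s(1)=D^{-s}\widetilde W(1)+I_s(\theta)(1)$, with $D^{-s}\widetilde W(1)$ having law $\mu_s$ by \eqref{law}. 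Combining the Gibbs/Donsker--Varadhan variational principle
\begin{align*}
-\log\E_\P\bigl[e^{-F(Y_s(1))}\bigr]\le\E_{\mathbb{Q}_\theta}[F(Y_s(1))]+\operatorname{KL}(\mathbb{Q}_\theta\,\|\,\P)
\end{align*}
with the direct identity $\operatorname{KL}(\mathbb{Q}_\theta\,\|\,\P)=\tfrac12\E_{\mathbb{Q}_\theta}[\int_0^1\|\theta\|_{L^2_x}^2\,dt]$ and rewriting the right-hand side as an expectation over the shifted variable yields the $\le$ inequality after taking the infimum over $\theta\in\Ha$.

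\medskip

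\noindent\textbf{Lower bound ($\ge$) and main obstacle.} For the reverse inequality, I set $Z:=\E_\P[e^{-F(Y_s(1))}]$ and define the optimal measure $\mathbb{Q}^*$ by $d\mathbb{Q}^*/d\P=Z^{-1}e^{-F(Y_s(1))}$. The positive martingale $M(t):=\E_\P[d\mathbb{Q}^*/d\P\mid\F_t]$ admits, by the martingale representation theorem for the cylindrical Brownian filtration, an It\^o expansion $dM(t)=M(t)\langle\theta^*(t),dW(t)\rangle_{L^2_x}$ for some $\theta^*\in\Ha$. Applying It\^o's formula to $\log M$ and taking expectation under $\mathbb{Q}^*$ (under which $W-\int_0^\cdot\theta^*\,d\tau$ is a Brownian motion by Girsanov), together with $\log M(1)=-F(Y_s(1))-\log Z$ and the vanishing of the $d\widetilde W$-martingale part, gives equality in \eqref{var} at $\theta=\theta^*$ and thus establishes $\ge$. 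The main obstacle is the rigorous implementation of these steps in infinite dimensions: verifying Novikov's condition after suitable stopping, existence and measurability of the drift $\theta^*$ via martingale representation on the cylindrical Brownian filtration, and confirming that $\theta^*$ is mean-zero (consistent with the mean-zero structure of $W$) and lies in $L^2([0,1];L^2(\T^d))$ $\P$-a.s. The hypothesis $s>\tfrac d2$ is essential here: it ensures $Y_s(1)\in L^2(\T^d)$ and $I_s(\theta)(1)\in H^s(\T^d)$ for any $\theta\in\Ha$, so that $F(Y_s(1)+I_s(\theta)(1))$ is a bona fide measurable function of an $H^{s-\frac d2-}(\T^d)$-valued argument.
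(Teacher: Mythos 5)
The paper does not prove this lemma: it states it as a known result and cites Bou\'e--Dupuis \cite{BD}, \"Ust\"unel \cite{Ust}, and Budhiraja--Dupuis \cite{BuD} for the infinite-dimensional setting, so there is no in-paper argument to compare against. Your sketch reproduces the standard proof strategy of those references: Girsanov plus the Gibbs/Donsker--Varadhan duality for the ``$\le$'' direction, and construction of the optimal drift via martingale representation and It\^o's formula for the ``$\ge$'' direction, together with the preliminary truncation $F\mapsto F\vee(-N)$ to reduce to bounded $F$. That outline is correct, and to your credit you flag explicitly the technical gaps that remain. Those gaps are, however, exactly where the substance of \cite{BD,Ust,BuD} lies. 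In particular: (a) for the upper bound, after truncating $\theta$ you still have to pass from a bounded drift back to a general $\theta\in\mathbb{H}_a$, and more subtly you need to convert $\E_{\mathbb{Q}_\theta}\!\bigl[F\bigl(D^{-s}\widetilde W(1)+I_s(\theta)(1)\bigr)\bigr]$ into a $\P$-expectation of $F\bigl(Y_s(1)+I_s(\theta')(1)\bigr)$ for some admissible drift $\theta'$, which requires re-expressing $\theta$ as an adapted functional of $\widetilde W$ (a nontrivial reparametrisation step in the original proof); (b) for the lower bound, you need the martingale representation theorem for the filtration of the cylindrical Brownian motion and a verification that the resulting $\theta^*$ is in $L^2([0,1];L^2(\T^d))$ $\P$-a.s.\ and mean-zero (the latter is automatic here because $W$ carries only nonzero modes, so the representation drift has no zero-mode component, but this should be said). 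So: correct skeleton, consistent with the literature being cited rather than with any argument in the paper itself, and the flagged obstacles would need to be filled in to make it a proof rather than a summary of one.
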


Since we only consider the non-singular case $s > \frac{d}2$,
then $Y_s(t)$ and $I_s(\theta)(1)$ enjoy
the following pathwise regularity bounds.

\begin{lemma}
\label{LEM:bounds}
\textup{(i)} Given any $s > \frac{d}2$ and any finite $p,q \ge 1$,
there exists $C_{s,p} >0$ such that
\begin{align}
\label{bounds1}
\E \big[ \|Y_s(1) \|_{L^q (\T^d)}^p \big] \le \E \big[ \|Y_s(1) \|_{L^\infty (\T^d)}^p \big] \le C_{s,p} < \infty .
\end{align}

\noi
\textup{(ii)} For any $\theta \in \mathbb{H}_a$,
we have
\begin{align}
\label{I}
\| I_s (\theta) (1)\|^2_{\dot H^s (\T^d)} \le \int_0^1 \| \theta (t) \|_{L^2_x}^2 dt.
\end{align}
\end{lemma}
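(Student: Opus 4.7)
\smallskip

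\noindent\textbf{Plan for part (ii).} This is a direct computation. Applying $D^s$ to the definition of $I_s(\theta)(1)$ cancels the $D^{-s}$, giving
\[
D^s I_s(\theta)(1) = \int_0^1 P_{\neq 0}\theta(\tau)\,\mathd\tau.
\]
Since $P_{\neq 0}$ is a bounded orthogonal projection on $L^2(\T^d)$, Minkowski's inequality in $L^2_x$ followed by Cauchy--Schwarz on the time interval $[0,1]$ yields
\[
\|I_s(\theta)(1)\|_{\dot H^s(\T^d)} \;\le\; \int_0^1 \|\theta(\tau)\|_{L^2_x}\,\mathd\tau \;\le\; \bigg(\int_0^1 \|\theta(\tau)\|_{L^2_x}^2\,\mathd\tau\bigg)^{\!1/2},
\]
and squaring gives \eqref{I}.

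\smallskip

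\noindent\textbf{Plan for part (i).} The hypothesis $s > \tfrac{d}{2}$ is exactly what makes $Y_s(1)$ almost surely a continuous function with all moments finite; in particular, it suffices to prove the $L^\infty$ bound since $L^\infty(\T^d) \hookrightarrow L^q(\T^d)$. The plan is to fix an exponent $\sigma \in (0,\,s-\tfrac{d}{2})$ and an even integer $q \ge p$ with $\sigma q > d$, so that Sobolev embedding provides $W^{\sigma,q}(\T^d) \hookrightarrow L^\infty(\T^d)$. Then, by this embedding together with Jensen's inequality,
\[
\E\bigl[\|Y_s(1)\|_{L^\infty(\T^d)}^p\bigr] \;\lesssim\; \E\bigl[\|Y_s(1)\|_{W^{\sigma,q}(\T^d)}^p\bigr] \;\le\; \Bigl(\E\bigl[\|Y_s(1)\|_{W^{\sigma,q}(\T^d)}^q\bigr]\Bigr)^{p/q}.
\]
To estimate the last quantity, I would apply Fubini's theorem to swap $\E$ and the spatial integral, and then use the Gaussian moment bound $\E|X|^q \lesssim_q (\E|X|^2)^{q/2}$ pointwise in $x$:
\[
\E\bigl[\|Y_s(1)\|_{W^{\sigma,q}}^q\bigr] = \int_{\T^d} \E\bigl[|D^\sigma Y_s(1)(x)|^q\bigr]\,\mathd x \;\lesssim_q\; \int_{\T^d}\bigl(\E|D^\sigma Y_s(1)(x)|^2\bigr)^{q/2}\,\mathd x.
\]
A direct Plancherel calculation using \eqref{Yt} shows $\E|D^\sigma Y_s(1)(x)|^2 \sim \sum_{n\neq 0}|n|^{2(\sigma - s)}$, which is finite precisely because $2(s-\sigma) > d$. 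Combining these steps gives \eqref{bounds1}.

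\smallskip

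\noindent\textbf{Main obstacle.} Neither part is genuinely deep, but there is one point in (i) that deserves care: in the regime $s \in (\tfrac{d}{2},\,d]$, the naive approach of applying the embedding $H^\sigma \hookrightarrow L^\infty$ with $\sigma > \tfrac{d}{2}$ is incompatible with the variance condition $\sigma < s - \tfrac{d}{2} \le \tfrac{d}{2}$ that ensures the Gaussian sum converges. Routing through $W^{\sigma,q}$ with $q$ chosen sufficiently large resolves this tension and is the only step requiring explicit bookkeeping.
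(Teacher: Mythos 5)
Your proof is correct, and both parts track the paper's argument closely. For part (ii) the paper likewise cites Minkowski's inequality plus Cauchy--Schwarz, exactly what you do. For part (i) the paper lists ``H\"older's inequality, Sobolev embedding, Minkowski's inequality, and Wiener chaos estimate''; your Jensen-plus-Fubini step is just the special case of Minkowski's integral inequality once the two exponents are equalized, and your pointwise Gaussian moment bound is the $k=1$ Wiener chaos estimate. Your observation about choosing $q$ large to make $W^{\sigma,q}(\T^d)\hookrightarrow L^\infty(\T^d)$ compatible with $\sigma<s-\tfrac d2$ is indeed the one bookkeeping point in the argument, and you handle it correctly. One small remark: your displayed identity $\E[\|Y_s(1)\|_{W^{\sigma,q}}^q]=\int_{\T^d}\E[|D^\sigma Y_s(1)(x)|^q]\,\mathd x$ uses only the homogeneous seminorm; for the full $W^{\sigma,q}$ norm one should also include the $L^q$ piece, which is bounded by the identical Fubini--Gaussian-moment computation with $\sigma=0$, so nothing is lost.
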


\begin{proof}
Part (i) follows from 
H\"older's  inequality, 
Sobolev embedding,
Minkowski's inequality, and Wiener chaos estimate
\cite[Lemma 2.4]{BOP4} with $k=1$.
As for Part (ii), the estimate \eqref{I} follows from Minkowski's inequality and Cauchy-Schwarz' inequalities.
\end{proof}

%\noi
%Before preceding with the proof,
%let us recall the following basic tail estimate
%\begin{lemma}
%\label{LEM:tail}
%Let $Y_s(1)$ be as in \eqref{Yt} and $p\ge2$.
%Then, there exists $C, c >0$ such that
%\begin{align}
%\P (\| P_{\ge N} Y_s(1)  \|_{L^p(\T^d)} > \lambda) \le C e^{-c \lambda^2 N^{1+\frac2p}}
%\end{align}
%
%\noi
%for all $\lambda >0$.
%\end{lemma}

We conclude this subsection by recalling 
the following simple corollary of  Fernique's theorem \cite{fernique}.
See also Theorem~2.7 in \cite{DZ}
and Lemma 4.2 in \cite{OST1}.

\begin{lemma}\label{LEM:Fer}
There exists a  constant $c>0$ such that if $X$ is a mean-zero Gaussian process  with values 
in a separable Banach space $B$ with $\E\big[\|X\|_{B}\big]<\infty$, then
\begin{align*}
\int e^{ c \frac{\|X\|_B^2}{(\E[\|X\|_B])^2}}\,d \P <\infty.
%\label{exp1}
\end{align*}

\noi
In particular, we have
\begin{equation*}
\P\big(\|X\|_B \ge t \big)\les \exp \bigg[- \frac{c t^2}{ \big( \E\big[\|X\|_B\big] \big)^2}  \bigg]
%\label{exp2}
\end{equation*}

\noi
for any $t>1$.
\end{lemma}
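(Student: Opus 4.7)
The plan is to follow Fernique's classical argument, suitably tracked to reveal the explicit dependence on $\E[\|X\|_B]$ and the universality of the constant $c$. The core ingredient is the \emph{rotational invariance} of centered Gaussian laws: take $X_1, X_2$ to be independent copies of $X$ (possible on an enlarged probability space), and use separability of $B$ together with characteristic functionals on $B^*$ to check that the pair
\begin{equation*}
\bigg(\frac{X_1 + X_2}{\sqrt 2},\ \frac{X_1 - X_2}{\sqrt 2}\bigg)
\end{equation*}
has the same joint law as $(X_1, X_2)$. This reduction to finite-dimensional Gaussian computations is the only place where separability is used.

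Writing $u = (X_1+X_2)/\sqrt 2$ and $v = (X_1-X_2)/\sqrt 2$, so that $X_1 = (u+v)/\sqrt 2$ and $X_2 = (u-v)/\sqrt 2$, the triangle inequality shows that on the event $\{\|u\|_B \leq s,\ \|v\|_B \geq t\}$ one has $\min(\|X_1\|_B, \|X_2\|_B) \geq (t-s)/\sqrt 2$. Combined with the rotational invariance and the independence of $X_1, X_2$, this yields the fundamental inequality
\begin{equation*}
\P(\|X\|_B \leq s)\cdot \P(\|X\|_B \geq t) \leq \P\big(\|X\|_B \geq (t-s)/\sqrt 2\big)^2
\end{equation*}
for every $0 < s < t$. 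The next step is to fix a baseline scale tied to $\E[\|X\|_B]$ via Markov's inequality: setting $s_0 = 2\E[\|X\|_B]$ gives $\P(\|X\|_B > s_0) \leq \tfrac{1}{2}$.

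Iterating the inequality along the sequence $t_0 = s_0$, $t_{n+1} = s_0 + \sqrt 2\, t_n$ (so that $t_n \simeq (\sqrt 2)^n s_0$) produces a double-exponentially small bound $\P(\|X\|_B \geq t_n) \leq \tfrac{1}{2}\cdot 2^{-2^n}$, which interpolates into a Gaussian-type tail bound of the form $\P(\|X\|_B \geq t) \lesssim \exp(-c\,t^2/\E[\|X\|_B]^2)$ valid for all $t > s_0$; this is exactly the second claim of the lemma, and the trivial range $t \leq s_0$ is absorbed into the implicit constant. The first claim then follows by writing $\E[e^{c'\|X\|_B^2/\E[\|X\|_B]^2}]$ as $1 + \int_0^\infty c' \lambda e^{c'\lambda^2/\E[\|X\|_B]^2}\P(\|X\|_B \geq \lambda)\,d\lambda$ and choosing any $c' < c$ so the Gaussian tail dominates. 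The only delicate point in the whole argument is the verification of rotational invariance in the Banach-valued setting; after that it is a deterministic iteration, so I do not anticipate any genuine obstacle.
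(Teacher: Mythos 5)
Your proposal reproduces the classical Fernique argument, and it is worth noting up front that the paper does not prove Lemma~\ref{LEM:Fer} at all: it simply cites Fernique's original note, Da Prato--Zabczyk, and Oh--Sosoe--Tolomeo. So there is no internal proof to compare against; your write-up is a genuine expansion, and the structure (rotational invariance of the pair $((X_1+X_2)/\sqrt2,(X_1-X_2)/\sqrt2)$, the resulting inequality $\P(\|X\|_B\le s)\P(\|X\|_B\ge t)\le \P(\|X\|_B\ge (t-s)/\sqrt2)^2$, the geometric iteration, and the layer-cake integration at the end) is exactly right.

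There is, however, one quantitative slip that breaks the iteration as written. Setting $s_0=2\E[\|X\|_B]$, Markov gives only $a_0:=\P(\|X\|_B\ge s_0)\le \tfrac12$ and $q:=\P(\|X\|_B\le s_0)\ge \tfrac12$. The fundamental inequality with $s=s_0$, $t=t_{n+1}$ then yields $a_{n+1}\le a_n^2/q\le 2a_n^2$, and $a=\tfrac12$ is precisely the fixed point of $a\mapsto 2a^2$: from $a_0\le\tfrac12$ you can conclude only $a_n\le\tfrac12$ for all $n$, not the double-exponential bound $a_n\le\tfrac12\cdot 2^{-2^n}$ you assert, which needs $a_0\le\tfrac14$ to start. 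The repair is trivial but necessary: take $s_0=4\E[\|X\|_B]$ (or any factor strictly greater than $2$), so Markov gives $a_0\le\tfrac14$ and $q\ge\tfrac34$, after which $a_{n+1}\le 2a_n^2$ does yield $a_n\le\tfrac12\cdot 2^{-2^n}$ by induction. Combined with $t_n\simeq(\sqrt2)^n s_0$, hence $2^n\simeq (t_n/s_0)^2$, this gives the Gaussian tail $\P(\|X\|_B\ge t)\les \exp(-c\,t^2/\E[\|X\|_B]^2)$ for $t\ge s_0$, and the exponential-moment claim follows as you indicate by integrating the tail and taking $c'<c$. With that one constant corrected, the argument is complete.
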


\subsection{Integrability}
In this subsection, 
we demonstrate the proof of 
the integrability part of Theorem \ref{THM:main}.
Namely,
we prove the boundedness of 
$Z_{s,p,K}$ (i) for all $K>0$ when $2< p < \frac{4s}d+2$ and 
(ii) for all $K < \|Q\|_{L^2(\R^d)}$ when $p=\frac{4s}d+2$,
where $Q$ is the optimizer for the GNS inequality on $\R^d$. 

\begin{proof}[Theorem \ref{THM:main} - {\rm (i)}
and the first half of {\rm (ii)}]
It suffices to show the following bound
\begin{align}
\label{var1}
Z_{s,p,K} = \E_{\mu_s}  \Big[ \exp (R_p(u)) \cdot  \ind_{\{\|u\|_{L^2(\T^d)}\le K\}}\Big] < \infty, 
\end{align}

\noi
where $R_p(u)$ is the potential energy denoted by
\begin{align}
\label{Rp}
R_p (u) : = {\frac{1}{p} \int_{\T^d} |u|^p\,\mathd  x}.
\end{align}

\noi
Observing that
\begin{align*}
%\label{var2}
\E_{\mu_s}  \Big[ \exp (R_p(u)) \cdot  \ind_{\{\|u\|_{L^2(\T^d)}\le K\}}\Big]
\le \E_{\mu_s}  \bigg[ \exp \Big(R_p(u) \cdot  \ind_{\{\|u\|_{L^2(\T^d)}\le K\}} \Big)\bigg],
\end{align*}

\noi
then the bound \eqref{var1} follows once we have
\begin{align}
\label{var3}
\begin{split}
&  \E_{\mu_s}  \bigg[ \exp \Big(R_p(u) \cdot  \ind_{\{\|u\|_{L^2(\T^d)}\le K\}} \Big)\bigg]  < \infty.
 \end{split}
\end{align}

\noi
From \eqref{law} and the Bou\'e-Dupuis variation formula Lemma \ref{LEM:var},
it follows that
\begin{align}
\label{var4}
\begin{split}
& -  \log  \E_{\mu_s}  \Big[ \exp \Big(R_p(u) \cdot  \ind_{\{\|u\|_{L^2(\T^d)}\le K\}} \Big)\Big]\\
& \hphantom{X}  = -  \log  \E  \Big[ \exp \Big(R_p(Y_s(1)) \cdot  \ind_{\{\|Y_s(1)\|_{L^2(\T^d)}\le K\}} \Big)\Big] \\
&\hphantom{X} = \inf_{\theta \in \mathbb{H}_a} \E \Big[ - R_p \big( Y_s(1) +I_s(\theta)(1) \big) 
\cdot  \ind_{\{\|Y_s(1) +I_s(\theta)(1)\|_{L^2(\T^d)}\le K\}} + \frac12 \int_0^1 
 \| \theta (t) \|_{L^2_x}^2 dt  \Big], 
\end{split}
\end{align}

\noi
where $Y(1)$ is given in \eqref{Yt}.
Here, $\E_{\mu_s}$ and $\E$ denote expectations with respect to the Gaussian field $\mu_s$ and the underlying probability measure $\P$ respectively.
In the following, 
we show that the right hand side of \eqref{var4} 
has a finite lower bound.
The key observation is that 
(i) in the subcritical setting, we view $Y_s(1)$ as a perturbation 
with finite $L^2(\T^d)$ norm;
(ii) in the critical setting, we have
``$Y_s(1) = P_{\le N} Y_s(1) +$ a perturbation"
for large $N \gg 1$, 
where the perturbation term is small under $L^2(\T^d)$
norm with large probability. 
We, therefore, distinguish two cases depending on subcritical/critical interactions.

\medskip

\noi
{\it \underline{Case 1}: subcritical $p <  \frac{4s}d +2$.} 
In this case, we prove \eqref{var3} 
with a mass cut-off of any finite size $K$.
We first recall an elementary inequality, 
which is a direct consequence of the mean value theorem and 
 the Young's inequality.
Given $p >2$ and $\eps >0$, there exists $C_\eps$ such that
\begin{align}
\label{Young}
|z_1 + z_2|^p \le (1+ \eps)|z_1|^p + C_\eps |z_2|^p  
\end{align}

\noi
holds uniformly in $z_1,z_2 \in \mathbb C$.
From \eqref{Rp}, \eqref{Young},
Proposition \ref{PROP:T},
and the fact
\[
\{\|Y_s(1) +I_s(\theta)(1)\|_{L^2(\T^d)}\le K\} \subset 
\{ \|I_s(\theta)(1)\|_{L^2(\T^d)}\le K+\| Y_s(1) \|_{L^2 (\T^d)} \},
\]
we obtain
\begin{align}
\label{var5}
%\begin{split}
& R_p \big(Y_s(1) +I_s(\theta)(1)\big) 
\cdot  \ind_{\{\|Y_s(1) +I_s(\theta)(1)\|_{L^2(\T^d)}\le K\}} \notag\\
%\cdot  \ind_{\{\|Y_s(1) \|_{L^2(\T^d)}\le \eps\}} \\
& \le (1+ \eps) R_p \big( I_s(\theta)(1)\big) 
\cdot  \ind_{\{\|I_s(\theta)(1)\|_{L^2(\T^d)}\le K+\| Y_s(1) \|_{L^2 (\T^d)} \}}   + C_\eps R_p(Y_s(1)) \notag\\
& \le \frac{1+\eps}p (C_{\textup{GNS}} + \delta) (K+\| Y_s(1) \|_{L^2 (\T^d)})^{2+\frac{p-2}{2s}(2s-d)}  \| I_s(\theta)(1)\|^{\frac{(p-2)d}{2s}}_{\dot H^s (\T^d)}  \notag\\
& \hphantom{XXXXX} + C_\delta (K+ \| Y_s(1) \|_{L^2 (\T^d)})^p + C_\eps R_p(Y_s(1)). \notag\\
\intertext{Noting that $\frac{(p-2)d}{2s} < 2$ in this case, 
we apply Young's inequality to continue with}
& \le  C  + C \| Y_s(1)\|_{L^2 (\T^d)}^{2+\frac{4s(p-2)}{4s-(p-2)d}} 
+ \frac14 \| I_s(\theta)(1)\|_{\dot H^s (\T^d)}^2 
 +C  \| Y_s(1) \|_{L^2 (\T^d)}^p
+ C R_p(Y_s(1))
%\end{split}
\end{align}

\noi
where $C$ is a constant depending on $\eps,\delta, p,d,s,\|Q\|_{L^2}$, and $K$.
By collecting \eqref{var4}, \eqref{var5} and Lemma \ref{LEM:bounds},
%and Lemma \ref{LEM:bounds},
%, and \eqref{eta},
we arrive at
 \begin{align*}
%\label{var70}
\begin{split}
- & \log  \E_{\mu_s}  \bigg[ \exp \Big(R_p(u) \cdot  \ind_{\{\|u\|_{L^2(\T^d)}\le K\}} \Big)\bigg] \\
& \ge \inf_{\theta \in \mathbb{H}_a} \E \bigg[ - C - C \| Y_s(1)\|_{L^2 (\T^d)}^{2+\frac{4s(p-2)}{4s-(p-2)d}}  
 - C  \| Y_s(1) \|_{L^2 (\T^d)}^p  
 - C R_p(Y_s(1)) \\
& \hphantom{XXXXXXXX} 
- \frac14 \| I_s(\theta)(1)\|_{\dot H^s (\T^d)}^2 + \frac12 \int_0^1 
 \| \theta (t) \|_{L^2_x}^2 dt \bigg]\\
 & \ge \inf_{\theta \in \mathbb{H}_a} \E \bigg[ - C - C \| Y_s(1)\|_{L^2 (\T^d)}^{2+\frac{4s(p-2)}{4s-(p-2)d}}  
 - C  \| Y_s(1) \|_{L^2 (\T^d)}^p  
 - C \|Y_s(1)\|_{L^p(\T^d)}^p \\
& \hphantom{XXXXXXXXXXX} 
+ \frac14  \int_0^1 
 \| \theta (t) \|_{L^2_x}^2 dt \bigg]\\
% & \ge  \inf_{\theta \in \mathbb{H}_a} \bigg[ - C - \frac{1}{4} \E  \Big[\| Y_s(1)\|_{\dot H^s (\T^d)}^2\Big] - \frac{1}{4} \E  \Big[ \| I_s(\theta)(1)\|_{\dot H^s (\T^d)}\Big]  + \frac12 \int_0^1 
% \| \theta (t) \|_{L^2_x}^2 dt \bigg]\\
 & \ge  \E \Big[  - C - C \|Y_s(1)\|_{L^p(\T^d)}^p - C  \| Y_s(1) \|_{L^2 (\T^d)}^p - C \| Y_s(1)\|_{L^2 (\T^d)}^{2+\frac{4s(p-2)}{4s-(p-2)d}}
   \Big] \\
 &\ge -  C - 2CC_{s,p} - C C_{s,2+\frac{4s(p-2)}{4s-(p-2)d}}  > -\infty,
\end{split}
\end{align*}

\noi
where $C_{s,r}$ is defined in Lemma \ref{LEM:bounds} (i). 
Thus we finish the proof of \eqref{var3} in the subcritical case.

\medskip

\noi
{\it \underline{Case 2}: critical interaction $p = \frac{4s}d +2$.}
We shall prove \eqref{var3} below the critical mass threshold 
$K < \|Q\|_{L^2 (\R^d)}$.
%In order to prove \eqref{var3} in this critical case,
To get the sharp mass threshold,
we view $P_{\ge N} Y_s (1)$ as a perturbation instead.
It turns out that as $N$ is getting larger,
the probability of $P_{\ge N} Y_s (1)$ being large
shrinks exponentially to zero. See \eqref{Hsbound2}.
%Recall $Y_s$ given in \eqref{Yt}.

Since $s > \frac{d}2$, 
it follows that
\[
\lim_{N \to \infty} \| P_{\ge N} Y_s(1) \|_{L^2 (\T^d)} = 0,
\]

\noi
almost surely.
Therefore, given small $\eps >0$, 
for $\o \in \O$ almost sure, 
there exists an unique $N_\eps : = N_\eps(\o)$ such that
\begin{align}
\label{Neps}
\| P_{\ge \frac{N_\eps}2} Y_s(1) \|_{L^2 (\T^d)} > \eps \,\, \textup{ and } \,\,
\| P_{> N_\eps} Y_s(1) \|_{L^2 (\T^d)} \le \eps.
\end{align}

%We first show \eqref{var4} is bounded form below conditioning to 
%$\| Y_s(1) \|_{L^2 (\T^d)} \le \eps$ for $\eps$ sufficiently small, i.e.
%\begin{align}
%\label{small}
%\inf_{\theta \in \mathbb{H}_a} \E \Big[ B(Y_s, \theta) \cdot \ind_{\{\|Y_s(1) \|_{L^2 (\T^d)} \le \eps\}} \Big] > - \infty.
%\end{align}

\noi
Similar argument as before with \eqref{Young}, Proposition \ref{PROP:T}, and \eqref{Neps},
yields that
\begin{align}
\label{var5a}
\begin{split}
& R_p \big(Y_s(1) +I_s(\theta)(1)\big) 
\cdot  \ind_{\{\|Y_s(1) +I_s(\theta)(1)\|_{L^2(\T^d)}\le K\}} \\
%\cdot  \ind_{\{\|Y_s(1) \|_{L^2(\T^d)}\le \eps\}} \\
& \le (1+ \eps) R_p \big( P_{\le N_\eps} Y_s(1) + I_s(\theta)(1)\big) 
\cdot  \ind_{\{\|P_{\le N_\eps} Y_s(1) + I_s(\theta)(1)\|_{L^2(\T^d)}\le K+\eps \}}   + C_\eps R_p(Y_s(1))\\
& \le \frac{1+\eps}p (C_{\textup{GNS}} + \delta) (K+\eps)^{p-2} ( \| P_{\le N_\eps} Y_s(1)\|_{\dot H^s (\T^d)}  + \| I_s(\theta)(1)\|_{\dot H^s (\T^d)} )^2 \\
& \hphantom{XXXXX} + C_\eps R_p(Y_s(1))  + C_\delta (K+ \eps)^p\\
& \le \frac{(1+\eps)^2}p (C_{\textup{GNS}} + \delta) (K+\eps)^{p-2}  \| I_s(\theta)(1)\|^{2}_{\dot H^s (\T^d)} + C \| P_{\le N_\eps} Y_s(1)\|^{2}_{\dot H^s (\T^d)} \\
& \hphantom{XXXXX} + C_\eps R_p(Y_s(1))  + C_\delta (K+ \eps)^p,
\end{split}
\end{align}

\noi
where $C$ is a constant depending on $\eps,\delta, p,d,s,\|Q\|_{L^2}$, and $K$.
Since $C_{\textup{GNS}} = \frac{p}2 \|Q\|_{L^2}^{2-p}$,  $K < \| Q\|_{L^2 (\R^d)}$ and $p >2$, 
there exist $\eta , \eps, \delta >0$ such that
\begin{align}
\label{eta}
\frac{(1+\eps)^2}p (C_{\textup{GNS}} + \delta) (K+\eps)^{p-2}  < \frac{1-\eta}2.
\end{align}

\noi
By collecting \eqref{var4}, \eqref{var5a}, \eqref{eta},
and Lemma \ref{LEM:bounds},
%, and \eqref{eta},
we arrive at
 \begin{align*}
%\label{var7}
\begin{split} 
%\log  \E_{\mu_s}  \bigg[ \exp \Big(R_p(u) \cdot  \ind_{\{\|u\|_{L^2(\T^d)}\le K\}} \Big) \bigg] = 
- & \log  \E_{\mu_s}  \bigg[ \exp \Big(R_p(u) \cdot  \ind_{\{\|u\|_{L^2(\T^d)}\le K\}} \Big)\bigg]\\
& \ge \inf_{\theta \in \mathbb{H}_a} \E \bigg[  - \frac{1-\eta}{2} \| I_s(\theta)(1)\|_{\dot H^s (\T^d)}^2 - C_\eps \| P_{\le N_\eps} Y_s(1)\|_{\dot H^s (\T^d)}^2 - C_\delta (K+\eps)^p  \\
& \hphantom{XXXXXXXXXX} -  C_\eps R_p(Y_s(1)) + \frac12 \int_0^1 
 \| \theta (t) \|_{L^2_x}^2 dt  \bigg]\\
% & \ge  \inf_{\theta \in \mathbb{H}_a} \bigg[ - C - \frac{1}{4} \E  \Big[\| Y_s(1)\|_{\dot H^s (\T^d)}^2\Big] - \frac{1}{4} \E  \Big[ \| I_s(\theta)(1)\|_{\dot H^s (\T^d)}\Big]  + \frac12 \int_0^1 
% \| \theta (t) \|_{L^2_x}^2 dt \bigg]\\
 & \ge  \inf_{\theta \in \mathbb{H}_a}  \E \bigg[- C_\delta (K+\eps)^p -  C_\eps R_p(Y_s(1)) - C_\eps \| P_{\le N_\eps} Y_s(1)\|_{\dot H^s (\T^d)}^2  + \frac{\eta}{2} \int_0^1 
 \| \theta (t) \|_{L^2_x}^2 dt \bigg] \\
 & \ge  \E \Big[ - C_\delta (K+\eps)^p -  C_\eps R_p(Y_s(1)) - C_\eps \| P_{\le N_\eps} Y_s(1)\|_{\dot H^s (\T^d)}^2  \Big] \\
 & \ge  - C_\delta (K+\eps)^p -  C_\eps C_{s,p} - 
 C_\eps \E \big[  \| P_{\le N_\eps} Y_s(1)\|_{\dot H^s (\T^d)}^2  \big],
\end{split}
\end{align*}

\noi
where $C_{s,p}$ is given in \eqref{bounds1}.
We remark that $Y_s(1) \notin \dot H^s (\T^d)$ almost surely.
Therefore, to prove \eqref{var3},
it still needs to show that
\begin{align}
\label{Hsbound}
\E \big[ \| P_{\le N_\eps} Y_s(1)\|_{\dot H^s (\T^d)}^2 \big] 
 < \infty,
\end{align}

\noi
where $N_\eps$ is a random variable given by \eqref{Neps}.

Noting $Y_s(1)$ is a mean-zero random variable,
we may decompose $\O$ (by ignoring a zero-measure set) as
\begin{align}
\label{decom}
\O = \bigcup_{N \ge 1} \O_N,
\end{align}

\noi
where
\begin{align}
\label{ON}
\Omega_N = \Big\{ \o\in \O : N_\eps (\o) \in \big[\tfrac{N}2, N\big) \Big\}.
\end{align}

\noi
By \eqref{decom} and H\"older's  inequality, 
we have
\begin{align}
\label{Hsbound1}
\begin{split}
\E \big[ \| P_{\le N_\eps} Y_s(1)\|_{\dot H^s (\T^d)}^2 \big]  
& \le \sum_{N\ge 1} \E  \big[ \| P_{\le N} Y_s(1)\|_{\dot H^s (\T^d)}^2 \cdot \ind_{\O_N} \big] \\
& \le \sum_{N\ge 1} N^{2s} \E \big[ \| P_{\le N} Y_s(1)\|_{L^2 (\T^d)}^2 \cdot \ind_{\O_N} \big]\\
& \le \sum_{N\ge 1} N^{2s} \Big( \E \big[ \| Y_s(1)\|_{L^2 (\T^d)}^4 \big] \Big)^{\frac12} \cdot \P({\O_N})^{\frac12}
\\
& \le C_{s,4}^{\frac12} \sum_{N\ge 1} N^{2s}  \P({\O_N})^{\frac12},
\end{split}
\end{align}

\noi
where $C_{s,4}$ is given in \eqref{bounds1}.
By a direct computation, 
we have 
\begin{align}
\label{L2bound}
\E \big[ \| P_{\ge \frac{N}4} Y_s(1) \|_{L^{2} (\T^d)}^2  \big]  \sim N^{d-2s}.
\end{align}

\noi
It then follows from \eqref{Neps}, \eqref{ON},
H\"older's  inequality, Lemma \ref{LEM:Fer},
and \eqref{L2bound}, that
\begin{align}
\label{Hsbound2}
\begin{split}
\P(\O_N) & \le \P\big( \big\{ \| P_{\ge \frac{N}4} Y_s(1) \|_{L^2} > \eps \big\} \big) \\
%& \le \P\big( \big\{ \| P_{\ge \frac{N}4} \|_{\dot H^{\eps_0} (\T^d)} \ges N^{\eps_0} \eps \big\} \big) \\
& \les \exp \bigg\{-c \Big( \frac{\eps}{\E \big( \| P_{\ge \frac{N}4} Y_s(1) \|_{L^{2} (\T^d)}  \big) }\Big)^2 \bigg\} \\
& \les \exp \bigg\{- \Big( \frac{c \eps^2}{\E \big[ \| P_{\ge \frac{N}4} Y_s(1) \|_{L^{2} (\T^d)}^2  \big] } \bigg\} \\
& \les e^{-\tilde c \eps^2 N^{2s -d}},
\end{split}
\end{align}

\noi
where $c$ and $\tilde c$ are constant.
By collecting \eqref{Hsbound1} and \eqref{Hsbound2},
we conclude that
\begin{align*}
%\label{Hsbound3}
\begin{split}
\E \big[ \| P_{\le N_\eps} Y_s(1)\|_{\dot H^s (\T^d)}^2 \big]  
 \le C_{s,4}^{\frac12} \sum_{N\ge 1} N^{2s}  e^{- \frac{\tilde c}2 \eps^2 N^{2s -d}} < \infty,
\end{split}
\end{align*}

\noi
which finishes the proof of \eqref{Hsbound},
and thus \eqref{var3} in the critical case.

\medskip 

Therefore, we finish the proof of Theorem 1.1 -(i) and the first half of (ii).
%\newpage
\end{proof}
\medskip

\subsection{Non-integrability}

In this subsection,
we prove the rest of Theorem \ref{THM:main},
i.e. the non-integrability part of (ii) and (iii).
In particular, we show that the partition function 
\begin{align}
\label{part}
Z_{s,p,K}=\E_{\mu_s}
\Big[\exp({R_p(u)})
\ind_{\{\|u\|_{L^2(\T^d)}\le K\}}\Big] = \infty
\end{align}

\noi
under either of the following conditions
\begin{align}
\begin{split}
&\textup{(i) critical nonlinearity: } p = \frac{4s}d +2 \text{ and } K > \| Q\|_{L^2(\R^d)};\\
&\textup{(ii) super-critical nonlinearity: } p > \frac{4s}d +2 \text{ and any } K > 0.
\end{split}
\label{conditions}
\end{align}

\noi
Here $Q$ is the optimizer of the GNS 
inequality given in Theorem \ref{THM:fGNS} 
and Remark \ref{RMK:scaling}.
To prove \eqref{part},
we construct, within the ball $\{\|u\|_{L^2(\T^d)}\le K\}$, 
a sequence of 
drift terms given by perturbed scaled ``solitons", 
along which the variational formula \eqref{var} diverges.
The existence of such a sequence of scaled solitons
is guaranteed by the following lemma;

\begin{lemma}
\label{LEM:soliton}
%Let $\theta>0$.
Assume \eqref{conditions} holds.
Then, there exist 
a series of functions $\{W_\rho\}_{\rho >0} \subset H^s (\T^d) \cap L^p (\T^d)$ such that
\begin{align}
\label{soliton}
\begin{split}
\textup{ (i) } &H_{\T^d} (W_\rho)  \le - A_1 \rho^{-\frac{dp}2 + d},\\
\textup{ (ii) } &\| W_\rho\|_{L^p (\T^d)}^p \le A_2 \rho^{-\frac{dp}2 + d},\\
\textup{ (iii) } &\| W_\rho \|_{L^2(\T^d)} \le K - \eta,\\
\textup{ (iv) } & P_0 W_\rho \les 1 ,
% \\
% \textup{ (v) } & \| W_\rho\|_{L^{p-1} (\T^d)}^{p-1} \les A_3 \rho^{-\frac{d(p-1)}2 + d},
\end{split}
\end{align} 

\noi
where $H_{\T^d}$ is the Hamiltonian functional given in \eqref{Hamiltonian},
and $A_1, A_2, A_3, \eta > 0$ are constant uniformly in 
sufficiently small $\rho > 0$.
\end{lemma}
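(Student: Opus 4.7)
My plan is to construct $W_\rho$ as a localized, rescaled copy of the $\R^d$-optimizer $Q$ from Theorem~\ref{THM:fGNS}, normalized as in \eqref{scaling}. Fix a smooth cutoff $\chi\in C_c^\infty(\R^d)$ with $\chi\equiv 1$ on $\{|x|_\infty\le 1/4\}$ and $\supp\chi\subset(-1/2,1/2)^d$, and, for a parameter $\mu>0$ to be fixed, set
\[
\tilde W_\rho(x) := \mu\,\rho^{-d/2}\,\chi(x)\,Q(x/\rho),\qquad x\in\R^d.
\]
For $\rho$ sufficiently small, $\tilde W_\rho$ is supported in one fundamental domain of $\T^d$, so we identify it with a function $W_\rho$ on $\T^d$ via periodic extension. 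The scalar $\mu$ is chosen so that $\mu\|Q\|_{L^2(\R^d)}<K$: in the critical case $p=\frac{4s}d+2$ with $K>\|Q\|_{L^2(\R^d)}$, pick any $\mu\in\bigl(1,\,K/\|Q\|_{L^2(\R^d)}\bigr)$, a nonempty interval by hypothesis; in the supercritical case $p>\frac{4s}d+2$ for arbitrary $K>0$, pick any $\mu\in\bigl(0,\,K/\|Q\|_{L^2(\R^d)}\bigr)$.

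The main computation tracks the $L^2$, $L^p$, and $\dot H^s$ norms of $W_\rho$ as $\rho\to 0$. By the change of variables $y=x/\rho$ and dominated convergence, since $\chi(\rho y)\to 1$ pointwise and $Q\in L^2\cap L^p$, one gets
\[
\|W_\rho\|_{L^q(\T^d)}^q = \mu^q\,\rho^{-dq/2+d}\,\|Q\|_{L^q(\R^d)}^q\,(1+o(1)),\qquad q\in\{2,p\},
\]
as $\rho\to 0$. For the fractional Sobolev norm, Remark~\ref{RMK:compare} (inequality \eqref{sob_com}) gives $\|W_\rho\|_{\dot H^s(\T^d)}\le\|\tilde W_\rho\|_{\dot H^s(\R^d)}$, reducing us to an $\R^d$ estimate. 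Splitting $D^s[\chi\, Q(\cdot/\rho)] = \chi\, D^s Q(\cdot/\rho) + [D^s,\chi]\,Q(\cdot/\rho)$ and using the scaling $\|D^s Q(\cdot/\rho)\|_{L^2(\R^d)}^2 = \rho^{d-2s}\|D^s Q\|_{L^2(\R^d)}^2$ together with a standard commutator bound (the commutator costs one derivative and yields a strictly higher power of $\rho$, hence lower order), we obtain
\[
\|W_\rho\|_{\dot H^s(\T^d)}^2 \le \mu^2\,\rho^{-2s}\,\|D^s Q\|_{L^2(\R^d)}^2\,(1+o(1)).
\]

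Properties (ii)-(iv) follow immediately from these estimates: (ii) from the $L^p$ asymptotics; (iii) since $\|W_\rho\|_{L^2(\T^d)}\to\mu\|Q\|_{L^2(\R^d)}<K$, which yields an $\eta>0$ for all small $\rho$; (iv) from Cauchy-Schwarz on the unit-volume torus, $|P_0 W_\rho|\le\|W_\rho\|_{L^2(\T^d)}\lesssim 1$. For (i), combining the estimates gives
\[
H_{\T^d}(W_\rho)\le \tfrac{\mu^2}{2}\rho^{-2s}\|D^s Q\|_{L^2(\R^d)}^2 - \tfrac{\mu^p}{p}\rho^{-dp/2+d}\|Q\|_{L^p(\R^d)}^p + (\text{lower order}).
\]
In the critical case $-dp/2+d=-2s$, and the normalization \eqref{scaling} yields $\tfrac12\|D^s Q\|_{L^2(\R^d)}^2=\tfrac1p\|Q\|_{L^p(\R^d)}^p$, so the leading term becomes $\rho^{-2s}\cdot\tfrac{1}{p}\|Q\|_{L^p(\R^d)}^p\cdot(\mu^2-\mu^p)$, which is strictly negative of the claimed order since $\mu>1$ and $p>2$. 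In the supercritical case $-dp/2+d<-2s$, the negative $L^p$ term dominates the positive $\dot H^s$ term as $\rho\to 0$, giving $H_{\T^d}(W_\rho)\le -A_1\rho^{-dp/2+d}$ regardless of $\mu$. The one genuinely nontrivial technical point is the commutator bound $\|[D^s,\chi]Q(\cdot/\rho)\|_{L^2(\R^d)}$ for fractional $s$; this is handled by the Kato-Ponce fractional Leibniz rule (or by direct manipulation of the difference-operator characterization \eqref{Horm2}), exploiting the smoothness of $\chi$ and the Schwartz-type decay of $Q$ coming from the elliptic equation~\eqref{elliptic}.
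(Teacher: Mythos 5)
Your proof is correct and follows essentially the same approach as the paper's: localize a rescaled copy of the GNS optimizer $Q$ with a compactly supported cutoff, transfer the $\dot H^s$-norm from $\T^d$ to $\R^d$ via Remark~\ref{RMK:compare}, show the cutoff costs only a lower-order correction in $\rho$, and tune the amplitude parameter (your $\mu$, the paper's $\al$) to satisfy the mass constraint while keeping the Hamiltonian negative. The only cosmetic difference is that you phrase the cutoff error as a commutator bound $[D^s,\chi]$, while the paper invokes the fractional Leibniz rule plus Sobolev embedding and interpolation --- these are the same estimate in different clothing, and both work. One small inaccuracy worth flagging: you appeal to ``Schwartz-type decay of $Q$'' from the elliptic equation~\eqref{elliptic}, but for non-integer $s$ the ground state $Q$ has only polynomial decay (see Frank--Lenzmann \cite{FrankL}), not Schwartz decay. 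This does not affect the argument, since the commutator estimate needs only $Q\in H^s(\R^d)\cap L^2(\R^d)\cap L^p(\R^d)$, which Theorem~\ref{THM:fGNS} already provides.
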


In the next lemma, 
we construct an approximation $Z_M$ to $Y_s (1)$ in \eqref{Yt} through solving a stochastic differential equation.
These $Z_M$ act as controllable stochastic perturbations in defining the drift terms. See \eqref{theta} and \eqref{Itheta} in the following.
Similar approximation has appeared in \cite{OST}.
\begin{lemma}
\label{LEM:appro}
Given $s > \frac{d}2$ and a dyadic number $M \sim \rho^{-1} \gg 1$, 
define the $Z_M(t)$ by its Fourier coefficients:
Let $\ft Z_M (n,t)$ for $0 < |n| \le M$ be as follows:
\begin{align}
    \label{ODE}
    \begin{cases}
    d \ft Z_M (n,t) = |n|^{-s} M^{\frac{d}2} (\ft Y_s (n,t) - \ft Z_M (n,t)) dt\\
    \ft Z_M |_{t=0} =0,
    \end{cases}
\end{align}

\noi
and $\ft Z_M (n,t) = 0$ for $n=0$ and $|n| > M$.
Then the following holds:
\begin{align}
    \E & \big[\| Z_M  (1) - Y_s (1) \|_{L^p(\T^d)}^p \big] \les \max(M^{-s+ \frac{d}2}, M^{-\frac{d}2 +})^{\frac{p}2}, \text{ for } p\ge 1, \label{Lp}\\
    \E & \bigg[\Big\| D^s \frac{d}{dt} Z_M (t) \Big\|_{L^2(\T^d)}^2 \bigg] \les  \max (M^{\frac{3d}2 -s}, M^{\frac{d}2 +}), \label{dZL2}
\end{align}

\noi
for any $M \gg 1$.
\end{lemma}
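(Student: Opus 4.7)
The plan is to solve the linear ODE \eqref{ODE} by Duhamel's formula, recognize the resulting error $\phi_n(t) := \ft Y_s(n,t) - \ft Z_M(n,t)$ as an Ornstein--Uhlenbeck process, use the It\^o isometry to compute $\E|\phi_n(t)|^2$ on the Fourier side, and then convert these second-moment bounds into physical-side $L^p$ and $\dot H^s$ bounds via Parseval and the Gaussianity of each Fourier mode.

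Setting $\al_n := |n|^{-s} M^{d/2}$ and using $\ft Y_s(n,\cdot) = |n|^{-s} B_n(\cdot)$, a direct computation from \eqref{ODE} shows that $\phi_n$ satisfies the Ornstein--Uhlenbeck SDE $d\phi_n = |n|^{-s}\,dB_n - \al_n \phi_n\,dt$ with $\phi_n(0)=0$ for $0<|n|\le M$, and $\phi_n = \ft Y_s(n, \cdot)$ for $|n| > M$. Solving and applying It\^o's isometry yields, uniformly in $t \in [0,1]$,
\begin{align*}
\E|\phi_n(t)|^2 \les |n|^{-2s} \cdot \frac{1 - e^{-2\al_n t}}{\al_n} \les |n|^{-2s} \min\big(1,\,|n|^s M^{-d/2}\big)
\end{align*}
\noi
for $0 < |n| \le M$, while $\E|\phi_n(t)|^2 \les |n|^{-2s}$ for $|n| > M$.

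For \eqref{Lp}, set $u := Z_M(1) - Y_s(1)$. Since the Fourier modes of $u$ are independent centered Gaussians, the pointwise variance $V(x) := \E|u(x)|^2 = \sum_n \E|\phi_n(1)|^2$ is constant in $x$. Gaussian moment bounds then give $\E|u(x)|^p \les_p V^{p/2}$ for $p \ge 2$, and the same estimate follows for $1 \le p < 2$ from $\|u\|_{L^p(\T^d)} \le \|u\|_{L^2(\T^d)}$ combined with Jensen. It remains to bound $V$: splitting the Fourier sum at the threshold $|n| \sim M^{d/(2s)}$ (where $\al_n \sim 1$), both the low-frequency piece $M^{-d/2}\sum_{|n| \le M^{d/(2s)}} |n|^{-s}$ and the high-frequency piece $\sum_{|n| > M^{d/(2s)}} |n|^{-2s}$ are controlled by $M^{-d(2s-d)/(2s)}$ for $d/2 < s < d$ and by $M^{-d/2}$ for $s > d$, with a logarithm appearing precisely at $s = d$. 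Using the elementary inequality $d(2s-d)/(2s) \ge s-d/2$ on $[d/2, d]$, this yields $V \les \max(M^{-s + d/2}, M^{-d/2 +})$, proving \eqref{Lp}.

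For \eqref{dZL2}, differentiating \eqref{ODE} in $t$ shows that the $n$-th Fourier coefficient of $D^s \frac{d}{dt} Z_M(t)$ equals (up to constants) $M^{d/2}\,\phi_n(t)$ for $0 < |n| \le M$ and vanishes otherwise, so Plancherel gives $\E\|D^s \frac{d}{dt} Z_M(t)\|_{L^2(\T^d)}^2 \les M^d \sum_{0<|n|\le M} \E|\phi_n(t)|^2$. Splitting the sum at $|n| \sim M^{d/(2s)}$ and inserting the bound above, the right-hand side is $\les M^{d^2/(2s)}$ for $d/2 < s < d$ and $\les M^{d/2}$ (modulo logs) for $s \ge d$. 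The elementary inequality $(2s-d)(s-d) \le 0$ on $[d/2, d]$ rearranges to $d^2/(2s) \le 3d/2 - s$, converting this to $\les \max(M^{3d/2 - s}, M^{d/2 +})$ as claimed. The main obstacle is the careful Fourier-sum bookkeeping needed to merge the three regimes $d/2 < s < d$, $s = d$, $s > d$ into a single uniform right-hand side, with the ``$+$'' exponent absorbing a logarithm at $s = d$; once this is done, the passage from the $L^2_\o$ variance to the $L^p_\o L^p_x$ norm is essentially free from the Gaussianity of $u$.
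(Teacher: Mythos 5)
Your proof is correct and follows essentially the same route as the paper: solving the OU SDE by Duhamel, computing second moments by It\^o isometry and Plancherel, and upgrading to $L^p$ via Gaussian moment bounds (the paper invokes the Wiener chaos estimate for first-order chaos, which is the same thing). The only cosmetic difference is that you retain the sharper $\min\big(1, |n|^s M^{-d/2}\big)$ factor and split the Fourier sum at $|n|\sim M^{d/(2s)}$, whereas the paper simply bounds $\int_0^1 e^{-2\alpha_n(1-t')}\,dt'\le\frac{1}{2\alpha_n}$ and sums directly over $0<|n|\le M$; both yield the stated right-hand sides.
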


The proofs of Lemma \ref{LEM:soliton}
and Lemma \ref{LEM:appro} will be postponed to the next subsection.
Now 
we are ready to prove the rest of Theorem \ref{THM:main}.

\begin{proof}[Proof of Theorem \ref{THM:main} - the second half of \textup{(ii) $and$ (iii)}]
We shall prove \eqref{part} under conditions \eqref{conditions}.
Observing that
\begin{align*}
%\label{var11}
\E_{\mu_s}  \Big[ \exp (R_p(u)) \cdot  \ind_{\{\|u\|_{L^2(\T^d)}\le K\}}\Big]
\ge \E_{\mu_s}  \Big[ \exp \Big(R_p(u) \cdot  \ind_{\{\|u\|_{L^2(\T^d)}\le K\}} \Big)\Big] - 1,
\end{align*}

\noi
then \eqref{part} follows from
\begin{align}
\label{var10}
\E_{\mu_s}  \bigg[ \exp \Big(R_p(u) \cdot  \ind_{\{\|u\|_{L^2(\T^d)}\le K\}} \Big)\bigg] = \infty.
\end{align}

% We then apply Lemma \ref{LEM:var} to get 
% \begin{align}
% \label{var12}
% \begin{split}
% - & \log  \E_{\mu_s}  \bigg[ \exp \Big(R_p(u) \cdot  \ind_{\{\|u\|_{L^2(\T^d)}\le K\}} \Big)\bigg]\\
% & = \inf_{\theta \in \mathbb{H}_a} \E \bigg[ - R_p(Y_s(1) +I_s(\theta)(1)) 
% \cdot  \ind_{\{\|Y_s(1) +I_s(\theta)(1)\|_{L^2(\T^d)}\le K\}} + \frac12 \int_0^1 
%  \| \theta (t) \|_{L^2_x}^2 dt \bigg].
% % & \le \inf_{\rho > 0} \E \bigg[ - \frac1p \int_{\T^d} |\wt Y_s(1) +W_\rho|^p dx 
% %\cdot  \ind_{\{\|\wt Y_s(1) + W_\rho\|_{L^2(\T^d)}\le K\}} + \frac12 \| W_\rho\|_{\dot H^s}^2 \bigg],
% \end{split}
% \end{align}

% \noi
To apply Lemma \ref{LEM:var},
we construct the series of drift terms as follows.
Let $W_\rho$ be as in Lemma \ref{LEM:soliton}, and 
\begin{align}
\label{theta}
\theta (t) \in \Big\{ -  D^s \frac{d}{dt} Z_M (t) +  D^s W_\rho \Big\}_{\rho >0},
\end{align}

\noi 
where $\rho \ll 1$ and $ M \sim \rho^{-1}$ is a dyadic number.
From \eqref{theta}, we have
\begin{align}
    \label{Itheta}
    \begin{split}
        I_s (\theta) (1) & =  \int_0^1  D^{-s} P_{\neq 0} \theta (t) dt \\
        & = \int_0^1 (P_{\neq 0} W_\rho - \frac{d}{dt} Z_M (t)) dt\\
        & = P_{\neq 0} W_\rho - Z_{M}(1).
    \end{split}
\end{align}
Thus, from Lemma \ref{LEM:var},
\eqref{theta}, 
and \eqref{Itheta} we have
\begin{align}
\label{var13}
\begin{split}
- & \log  \E_{\mu_s}  \bigg[ \exp \Big(R_p(u) \cdot  \ind_{\{\|u\|_{L^2(\T^d)}\le K\}} \Big)\bigg]\\
& = \inf_{\theta \in \mathbb H_a} \E \bigg[ \Big( - R_p (Y_s(1) + I_s (\theta) (1))  \cdot \ind_{\{\|Y_s(1) + I_s (\theta) (1)\|_{L^2 (\T^d)}\le K\}}
+ \frac12 \int_0^1 \| \theta (t)\|_{L^2 (\T^d)}^2\Big)    \bigg] \\
 &   \le \inf_{0 < \rho \ll 1} \E \bigg[ \Big( - R_p( Y_s(1) - Z_M (1) + P_{\neq 0} W_\rho )  \cdot \ind_{\{\| Y_s(1) - Z_M (1) + P_{\neq 0} W_\rho \|_{L^2 (\T^d)}\le K\}}\\
 & \hphantom{XXXXXX}
+ \frac12 \int_0^1 \Big\| -  \frac{d}{dt} Z_M (t) +    P_{\neq 0} W_\rho \Big\|_{\dot H^s  (\T^d)}^2\Big) dt   \bigg]\\
 & = \inf_{0 < \rho \ll 1} \E \bigg[ 
 \Big(- R_p (W_\rho) + \frac12 \| W_\rho\|_{\dot H^s  (\T^d)}^2 \Big)\\
&\hphantom{XX} + \Big( R_p (W_\rho) - R_p (P_{\neq 0} W_\rho)\Big) \\
& \hphantom{XX}+  \Big( R_p (P_{\neq 0}  W_\rho) - R_p ( Y_s(1) - Z_M (1) + P_{\neq 0} W_\rho ) \Big)  \cdot \ind_{\{\| Y_s(1) - Z_M (1) + P_{\neq 0} W_\rho \|_{L^2 (\T^d)}\le K\}}\\
& \hphantom{XX} + R_p(P_{\neq 0}  W_\rho) \cdot \ind_{\{\| Y_s(1) - Z_M (1) + P_{\neq 0}  W_\rho \|_{L^2 (\T^d)} > K\}} \\
 & \hphantom{XX}
+ \frac12 \int_0^1 \Big\| -  \frac{d}{dt} Z_M (t) \Big\|_{\dot H^s  (\T^d)}^2 - 2 \Big\langle   \frac{d}{dt} Z_M (t) ,   W_\rho \Big\rangle_{\dot H^s (\T^d)} \Big) dt   \bigg] \\
& = \inf_{0 < \rho \ll 1} ( \textup{A + B  + C + D + E}).
\end{split}
\end{align}

\noi
In what follows, 
we consider these terms one by one for $0 < \rho \ll 1$.

For term (A), from \eqref{soliton} - (i),
we have
\begin{align}
    \label{term1}
    \textup{A}  = - R_p (W_\rho) + \frac12 \| W_\rho\|_{\dot H^s (\T^d)}^2 = H_{\T^d} (W_\rho) \les - \rho^{-\frac{dp}2 + d}.
\end{align}

\noi
For term (B),
from \eqref{soliton} - (iv) and the mean value theorem,
we have
\[
\begin{split}
\textup{B} & = \frac1{p} \int_{\T^d} \big(  |W_\rho|^p - |W_\rho - P_0 W_\rho|^p \big) dx \\
& \les 
\int_{\T^d} \big( |P_0 W_\rho |^p + |P_0 W_\rho| | W_\rho|^{p-1} \big) dx \\
& \les 1 + \|W_\rho\|_{L^{p-1} (\T^d)}^{p-1},
\end{split}
\]

\noi
Then, by interpolating \eqref{soliton} - (ii) and (iii),
we obtain
\begin{align}
    \label{term2}
    \textup{B} \les   \rho^{-\frac{d(p-1)}2 + d}.
\end{align}

\noi
For term (C),
by using the mean value theorem we see that
\[
\begin{split}
\int_{\T^d} \big( & |P_{\neq 0} W_\rho|^p - |Y_s(1) - Z_M (1) + P_{\neq 0}  W_\rho|^p \big) dx \\
& \les 
\int_{\T^d} \big( |Y_s(1) - Z_M (1) |^p + |Y_s(1) - Z_M (1)| | P_{\neq 0} W_\rho|^{p-1} \big) dx ,
\end{split}
\]

\noi
which together with Lemma \ref{LEM:appro} and Lemma \ref{LEM:soliton} gives
\begin{align}
\label{term3}
    \begin{split}
    \textup{C} = \E & \bigg[ \Big( R_p (P_{\neq 0} W_\rho) - R_p ( Y_s(1) - Z_M (1) + P_{\neq 0} W_\rho ) \Big)  \cdot \ind_{\{\| Y_s(1) - Z_M (1) + W_\rho \|_{L^2 (\T^d)}\le K\}}\bigg] \\
        & \les \int_{\T^d} \big( \E \big[ |Y_s(1) - Z_M (1) |^p \big] + \E \big[ |Y_s(1) - Z_M (1)|\big] | P_{\neq 0} W_\rho|^{p-1} \big) dx \\
        & \les \max(M^{-s+\frac{d}2}, M^{-\frac{d}2+})^{\frac{p}2}
        + \max(M^{-s+\frac{d}2}, M^{-\frac{d}2+})^{\frac{1}2} \| P_{\neq 0}  W_\rho \|_{L^{p-1}  (\T^d)}^{p-1}  \\
        & \les 
       ( \| P_{\neq 0}  W_\rho \|_{L^{p-1}  (\T^d)}^{p-1}   - \| W_\rho\|_{L^{p-1} (\T^d)}^{p-1} ) + \| W_\rho\|_{L^{p-1} (\T^d)}^{p-1}  \\
       & \les \rho^{-\frac{d(p-1)}2 + d},
    \end{split}
\end{align}

\noi
where in the last step, to bound $(\| P_{\neq 0}  W_\rho \|_{L^{p-1}  (\T^d)}^{p-1}   - \| W_\rho\|_{L^{p-1} (\T^d)}^{p-1} )$, 
we used a similar argument as in estimating term (B).
Now we turn to term (D),
by using Chebyshev's inequality, \eqref{soliton} - (iii), \eqref{Lp}, and \eqref{term2},
we have
\begin{align}
    \begin{split}
    \textup{D} &= \E \big[ R_p(P_{\neq 0} W_\rho) \cdot \ind_{\{\| Y_s(1) - Z_M (1) + P_{\neq 0}W_\rho \|_{L^2 (\T^d)} > K\}} \big] \\
    & \le  R_p(P_{\neq 0}  W_\rho) \cdot \E \big[ \ind_{\{\| Y_s(1) - Z_M (1)\|_{L^2 (\T^d)}  > K - \| W_\rho \|_{L^2 (\T^d)}\}} \big]\\
    & \le R_p(P_{\neq 0}  W_\rho) \frac{\E [\| Y_s(1) - Z_M (1)\|_{L^2 (\T^d)}^2] }{(K - \| W_\rho \|_{L^2 (\T^d)})^2}\\
    & \les  \rho^{-\frac{dp}2 + d} \max(M^{-s+\frac{d}2}, M^{-\frac{d}2+}) \\
    & \les \max ( \rho^{-\frac{d(p-1)}2 + s}, \rho^{-\frac{d(p-3)}2 +}),
    \end{split}
    \label{term4}
\end{align}

\noi
where in the last step we use the relation $M \sim \rho^{-1}$.
For term (E),
from \eqref{ODE} and \eqref{dZL2},
we have
\begin{align}
    \label{term5}
    \begin{split}
    \textup{E} 
    % & = \frac12 \int_0^t \E \bigg[  \Big\| -  \frac{d}{dt} Z_M (t) \Big\|_{\dot H^s}^2 \bigg] dt  - \int_0^t \E \Big[\Big\langle   \frac{d}{dt} Z_M (t) ,   W_\rho \Big\rangle_{\dot H^s} \Big] dt \\
    & = \frac12 \int_0^t \E \bigg[  \Big\| -  \frac{d}{dt} Z_M (t) \Big\|_{\dot H^s  (\T^d)}^2 \bigg] dt  \les \max( \rho^{-\frac{3d}2 + s}, \rho^{-\frac{d}2 +}),
    \end{split}
\end{align}

\noi 
where we used the fact that $Z_M$, removing the zero frequency, is a mean zero Gaussian random variable.
By collecting estimates  \eqref{term1}, \eqref{term2}, \eqref{term3}, \eqref{term4}, and \eqref{term5},
we conclude that 
\begin{align}
    \label{abcd}
    \textup{A + B + C + D + E} \les - \rho^{-\frac{dp}2 + d},
\end{align}

\noi
where we used \eqref{conditions} and the assumption $s > \frac{d}2$.

Finally,
the desired estimate \eqref{var10} follows from \eqref{var13} and \eqref{abcd}.
We thus finish the proof of Theorem \ref{THM:main}.
\end{proof}

\subsection{Proof of the auxiliary lemmas}
It remains to prove Lemmas \ref{LEM:soliton} and \ref{LEM:appro},
which is the main purpose of this subsection.
We first present the proof of Lemma \ref{LEM:soliton}.

\begin{proof}[Proof of Lemma \ref{LEM:soliton}]
Define $W_{\rho} \in H^s (\T^d)$ by
\begin{align}
\label{W}
W_{\rho} (x) : = \al \rho^{-\frac{d}2} \phi_\dl (x) Q(\rho^{-1}x) ,
\end{align}

\noi
where $\phi_\dl$ is the same as in Proposition \ref{PROP:T}, $\al >0$ is to be determined later and $Q$ is given in Theorem \ref{THM:fGNS} and 
Remark \ref{RMK:scaling}.
Then (iv) follows directly from $\|W_\rho\|_{L^1(\T^d)} \les \|W_\rho\|_{L^2(\T^d)}$.
We only consider (i) -- (iii) in what follows.
We distinguish two cases based on the conditions
in \eqref{conditions}:

\noi
{\it \underline{Case 1}: critical nonlinearity}. 
In this case, we have $p = \frac{4s}d +2 > 2 \text{ and } 
K > \| Q\|_{L^2(\R^d)}$.
Fix $\al >1$ such that 
\begin{align}
\label{eta1}
\| \al Q\|_{L^2 (\R^d)} = \al \| Q\|_{L^2 (\R^d)}  = K - \eta,
\end{align}

\noi
where $\eta$ is given in \eqref{soliton}.
Recall that $H_{\R^d}(Q) = 0$ from Remark \ref{RMK:scaling}.
We then have
\begin{align*}
H_{\R^d}(\al Q) = \frac{\al^2}2 \int_{\R^d} |D^{s} Q|^2 d x - \frac{\al^p}p \int_{\R^d} |Q|^p dx < 0. 
\end{align*}

\noi
Then, it follows from Remark \ref{RMK:compare} that
% \footnote{In what follows, 
% we shall use the following fractional Leibniz rule from \cite{KPV}: 
% \[
% \| D^s (fg) - fD^s g - g D^s f \|_{L^p (\R^d)} \les \|D^{s_1} f \|_{L^{p_1} (\R^d)} \|D^{s_2} g \|_{L^{p_2} (\R^d)}
% \]
% where $s = s_1 + s_2$, $0 < s, s_1, s_2 <1$, $\frac1p = \frac1{p_1} + \frac1{p_2}$ and $1 < p, p_1, p_2 < \infty$. This fractional Leibniz rule has been generalized to $s \ge 1$ in \cite{Li}.}
\begin{align*}
%\begin{split}
 H_{\T^d} (W_{\rho})&=\frac{\al^2}2\int_{\mathbb{T}^d}|D^s ( \phi_\dl Q_{\rho})|^2dx
 -\frac{\al^p}p\int_{\mathbb{T}^d}|\phi_\dl (x) Q_{\rho} (x)|^pdx\\
& \le \frac{\al^2}{2}  \|D^s (\phi_\dl Q_\rho) \|^2_{L^2(\R^d)}
-\frac{\al^p}{p} \int_{ \mathbb{T}^d}|\phi_\dl (x) Q_\rho (x)|^pdx,\\
\intertext{where $Q_\rho = \rho^{-\frac{d}2}  Q(\rho^{-1}x)$. By the fractional Leibnize rule \cite{KPV,Li}
and Sobolev embedding, we may continue with}
& \le \frac{\al^2 + \eps}{2}  \|D^s Q_\rho \|^2_{L^2(\R^d)}
-\frac{\al^p}{p} \int_{ \mathbb{T}^d}|\phi_\dl (x) Q_\rho (x)|^pdx + C \| Q_\rho \|_{W^{0+,2}}^2,\\
\intertext{then by interpolation we can continue with}
& \le \frac{\al^2 + 2\eps}{2}\rho^{-2s} \|D^s Q\|^2_{L^2(\R^d)}
-\frac{\al^p}{p}\rho^{-\frac{dp}2+d} \int_{ \mathbb{R}^d}| \phi_\dl (\rho x) Q (x)|^pdx + C \| Q_\rho \|_{L^2 (\R^d)}^2,\\
\intertext{we have $\int_{ \mathbb{R}^d}| \phi_\dl (\rho x) Q (x)|^pdx > \frac1{\al^\eps} \int_{ \mathbb{\R}^d}|Q (x)|^pdx$, provided that $\rho$ is sufficiently small. Thus, combining with \eqref{scaling} and the fact $2s = \frac{dp}2 -d$
from \eqref{conditions} - (i), we may continue with}
&\le \left(\frac{\al^2 + 2\eps }{p}-\frac{\al^{p-\eps}}{p}\right) \rho^{-\frac{dp}2+d}\|Q\|_{L^p(\R^d)}^p + C \|Q\|_{L^2(\R^d)}^2,
%\end{split}
\end{align*}

\noi
which finishes the proof of \eqref{soliton} - (i) by choosing $\eps$ small
enough and setting
\[ A_1 := \left(\frac{\al^{p-\eps}}{p} - \frac{\al^2 + 2\eps}{p} - \eps \right) 
\|Q\|_{L^p(\R^d)}^p. \]

\noi
%We note that $\theta >0$ due to $p >2$.
As to \eqref{soliton} - (ii) and (iii), we note that 
\begin{equation*}
\begin{split}
 \left\|W_{\rho}\right\|_{L^p(\mathbb{T}^d)}^p & \leq
\al^p \rho^{-\frac{pd}2+d}\|Q\|_{L^p(\R^d)}^p =  A_2 \rho^{-\theta},\\
 \left\|W_{\rho}\right\|_{L^2(\mathbb{T}^d)} & \le 
 \al \|Q_\rho\|_{L^2(\R^d)}
 = \al \|Q\|_{L^2(\R^d)} = K -\eta,
\end{split}
\end{equation*}

\noi
with $ A_2 : = \al^p \|Q\|_{L^p(\R^d)}^p $ and $\eta$ 
being the one in \eqref{eta1}.
%Now we turn to the proof of \eqref{soliton} - (iv),
%which needs more attention due to the presence of the non-local
%fractional derivatives.
%%As $s > \frac{d}2$ and $Q\in H^s (\T^d)$,
%%it follows from \eqref{elliptic} that $D^{2s} Q \in L^p (\T^d)$ for all
%%$p\ge 1$.
%Similar to the computation of \eqref{sob_com}, 
%we have
%\begin{align*}
%%\label{sob_com2}
%%\begin{split}
%\jb{v,W_\rho}_{\dot H^s (\T^d)} & = c_k^{-1} \sum_{n\in \Z^d} \bigg(    \int_{\R^d} \frac{|e^{2\pi i y \cdot n} -1|^{2k}}{|y|^{d+2s}} dy \bigg)  \ft v (n) \cj{\ft {W_\rho}(n)} \\
%& = c_k^{-1} \int_{\R^d} \frac{\jb{ \Delta_y^{k} v , \Delta_y^{k} W_\rho }_{L^2(\T^d)}}{|y|^{d+2s}} dy\\
%& = c_k^{-1} \int_{\R^d} \frac{\jb{ v , \Delta_y^{2k} Q_\rho }_{L^2(\T^d)}}{|y|^{d+2s}} dy\\
%&\le  c_k^{-1} \|v\|_{L^\infty (\T^d)} 
%\int_{\R^d} \frac{ \|\Delta_y^{2k} W_\rho \|_{L^1(\R^d)}}{|y|^{d+2s}} dy\\
%& \les  \eps \| W_\rho \|_{\dot W^{2s,1}(\R^d)} = \al \eps \rho^{-2s+ \frac{d}2} 
%\| Q\|_{\dot W^{2s,1} (\R^d)},
%%\end{split}
%\end{align*}
%
%\noi
%which is sufficient for our purpose since 
%$-2s + \frac{d}2 > -\theta$ due to \eqref{conditions}.
Thus, we finish the proof.

\medskip

\noi
{\it \underline{Case 2}: super-critical nonlinearity}.
In what follows, we assume $p > \frac{4s}d +2 $.
It only needs to prove \eqref{soliton} - (i) and (iii),
since (ii) follows the same way as that of Case 1.
Given $K>0$,
we choose $\al \ll1$ in \eqref{W} so that
\begin{align*}
%\label{cond3}
\left\|W_{\rho}\right\|_{L^2(\mathbb{T}^d)} \le \|W_\rho\|_{L^2 (\R^d)} = \al \|Q\|_{L^2 (\R^d)} < K - \eta,
\end{align*}

\noi
which gives \eqref{soliton} - (iii).
Similar computation as in the previous case, we have
\begin{align*}
%\begin{split}
 H_{\T^d} (W_{\rho}) &=\frac{\al^2}2\int_{\mathbb{T}^d}
 |D^s (\phi_\dl Q_{\rho})|^2dx
 -\frac{\al^p}p\int_{\mathbb{T}^d}|\phi_\dl Q_{\rho}|^pdx\notag\\
& \le  \frac{\al^2 + 2\eps}{2}\rho^{-2s} \|D^s Q\|^2_{L^2(\R^d)}
-\frac{\al^{p-\eps}}{p}\rho^{-\frac{dp}2+d} \| Q\|^p_{L^p(\R^d)} + C \| Q_\rho \|_{L^2 (\R^d)}^2,\notag\\
\intertext{for sufficiently small $\rho$ and $\dl$. 
Also, 
note that $p>\frac{4s}d+2$ implies $-2s > -\frac{dp}2+d$.
Thus, recalling \eqref{scaling}, we may continue with}
&\le \left(\frac{\al^2 + 2\eps}{p}\rho^{-2s}-\frac{\al^{p-\eps}}{2p}\rho^{-\frac{dp}2+d}\right)\|Q\|_{L^p(\R^d)}^p + C \|Q\|_{L^2(\R^d)}^2 \notag\\
&\leq - \wt A_1\rho^{-\frac{dp}2+d},
%\end{split}
%\label{cond1}
\end{align*}

\noi
for sufficiently small $\rho>0$ and some constant $\wt A_1 >0$.
Thus, we obtain \eqref{soliton} - (i).
We finish the proof of Lemma \ref{LEM:soliton}.
\end{proof}

Next, we present the proof of Lemma \ref{LEM:appro}.

\begin{proof}
[Proof of Lemma \ref{LEM:appro}]
% We only consider the case when $\frac{d}2 < s < d$,
% as the proof for $s \ge d$ is similar but easier.
Let 
\begin{align}
    \label{Xn}
    X_n (t) = \ft Y_s (n,t) - \ft Z_M (n,t), \quad 0< |n| \le M.
\end{align}

\noi
Then, from \eqref{Yt} and \eqref{ODE},
we see that $X_n(t)$ solves
\[
\begin{cases}
d X_n (t) = - |n|^{-s} M^{\frac{d}2} X_n (t) dt + |n|^{-s} dB_n (t)\\
X_n(0) = 0
\end{cases}
\]

\noi
for $0 < |n| \le M$.
Solving the above stochastic differential equation yields
\begin{align}
    \label{Xn1}
    X_n (t) = |n|^{-s} \int_0^t e^{-|n|^{-s} M^{\frac{d}2} (t-t')} dB_n (t').
\end{align}

\noi 
Then, 
from \eqref{Xn} and \eqref{Xn1},
we have
\begin{align}
    \label{ZM}
    \ft Z_M (t) = \ft Y_s (n,t) - |n|^{-s} \int_0^t e^{-|n|^{-s} M^{\frac{d}2} (t-t')} dB_n (t'),
\end{align}

\noi
for $0 < |n| \le M$.
In what follows,
we show that $Z_M$ approximates to $Y_s$ as $M \sim \rho^{-1}$ tends to infinity.
From \eqref{ZM}, the independence of $\{B_n\}_{n \in \Z^d}$, 
and Ito's isometry,
we have
\begin{align}
    \begin{split}
    \E \big[ |Z_M (1) - Y_s (1)|^2 \big] 
    & =  \sum_{0< |n| \le M} |n|^{-2s} \int_0^t e^{-2|n|^{-s} M^{\frac{d}2} (t-t')} dt' + \sum_{|n| > M} |n|^{-2s}\\
    & \les  \sum_{0< |n| \le M} |n|^{-s} M^{-\frac{d}2} + M^{-2s+d}\\
    & \les  \max(M^{-s+ \frac{d}2}, M^{-\frac{d}2 +}),
    \end{split}
    \label{pL2}
\end{align}

\noi 
which is sufficient for \eqref{Lp} with $p=2$.

When $p=1$,
\eqref{Lp} follows from \eqref{pL2} together with H\"older's  inequality
\[
\E \big[ |Z_M (1) - Y_s (1)| \big] \les \Big( \E \big[ |Z_M (1) - Y_s (1)|^2 \big] \Big)^{\frac12}.
\]

\noi
Then the case for $1<p <2$ follows from interpolation.
When $p > 2$, we note that
$Z_M (1) - Y_s(1) \in \mathcal H_1$, 
homogeneous Wiener chaoses of order 1.
Then, 
by using Wiener chaos estimate \cite[Lemma I.22]{Simon},
we obtain
% \eqref{Lp} follows from
\[
\begin{split}
\E \big[ |Z_M (1) - Y_s (1)|^p \big] 
\les  \E \big[ |Z_M (1) - Y_s (1)|^p \big] \les  \Big( \E \big[ |Z_M (1) - Y_s (1) |^2 \big] \Big)^{\frac{p}2},
\end{split}
\]

\noi
which together with \eqref{pL2} implies \eqref{Lp} for $p > 2$.

Finally, we turn to \eqref{dZL2}.
From \eqref{ODE} and \eqref{Xn}, 
we have
\begin{align*}
    \begin{split}
    \E  \bigg[\Big\| D^s \frac{d}{dt} Z_M (t) \Big\|_{L^2(\T^d)}^2 \bigg] & = M^d \sum_{0 < |n| \le M} \E \big[ |X_n(t) |^2 \big] \\
    & = M^d \sum_{0 < |n| \le M} |n|^{-2s} \int_0^t e^{-2|n|^{-s} M^{\frac{d}2} (t-t')} dt'\\
    & \les M^d \sum_{0 < |n| \le M} |n|^{-s} M^{-\frac{d}2} \\
    & \les \max (M^{\frac{3d}2 -s}, M^{\frac{d}2 +}).
    \end{split}
\end{align*}

\noi
We finish the proof of \eqref{dZL2} and thus we conclude this lemma.
\end{proof}

\begin{ackno}\rm
The authors~would like to thank Tadahiro Oh for
his kind help during the preparation 
of the paper.
Y.W.~was supported by 
 the EPSRC New Investigator Award 
 (grant no.~EP/V003178/1).
The authors also would like to thank anonymous referees' comments which help to improve the presentation of this paper.
\end{ackno}

\end{document}